\newtheorem{thm}{Theorem}
\newtheorem{corollary}{Corollary}[section]
\newtheorem{lemma}{Lemma}[section]
\newtheorem{remark}{Remark}[section]
\newcommand{\dd}{\mathrm{d}}
\begin{document}
\title{Ground state of a magnetic nonlinear Choquard equation}

\author{H. Bueno, G. G. Mamani and G. A. Pereira}

\begin{abstract}
	We consider the stationary magnetic nonlinear Choquard equation 
	\[-(\nabla+iA(x))^2u+ V(x)u=\bigg(\frac{1}{|x|^{\alpha}}*F(|u|)\bigg)\frac{f(|u|)}{|u|}{u},\]
	where $A: \mathbb{R}^{N}\rightarrow \mathbb{R}^{N}$ is a vector potential, $V$ is a scalar potential, $f\colon\mathbb{R}\to\mathbb{R}$ and $F$ is the primitive of $f$. Under mild hypotheses, we prove the existence of a ground state solution for this problem. We also prove a simple multiplicity result by applying Ljusternik-Schnirelmann methods.
\end{abstract}

\maketitle

\noindent keyword: Variational methods, magnetic Choquard equation, splitting lemma\\ 

\noindent MSC[2010]: 35Q55,35Q40,35J20

\section{Introduction}

We consider the problem
\begin{equation}\label{P}
-(\nabla+iA(x))^2u+ V(x)u=\bigg(\frac{1}{|x|^{\alpha}}*F(|u|)\bigg)\frac{f(|u|)}{|u|}{u}
\end{equation}
where $\nabla+iA(x)$ is the covariant derivative with respect to the $C^1$ vector potential
$A: \mathbb{R}^{N}\rightarrow \mathbb{R}^{N}$.  (After stating our hypotheses, the form of equation \eqref{P} will be changed to \eqref{Ptilde}). The constant $\alpha$  belongs to the intervals $(0,N)$ and
\[\lim_{|x|\to \infty}A(x)=A_\infty\in \mathbb{R}^N.\] 

The scalar potential $V\colon \mathbb{R}^{N}\rightarrow\mathbb{R}$ is a continuous, bounded function satisfying
\begin{enumerate}
	\item [($V1$)] $\inf_{\mathbb{R}^N}V>0$;
	\item [($V2$)] $V_\infty=\displaystyle\lim_{|y|\to\infty}V(y)$;
	\item [($V3$)] $V(x)\leq V_\infty$ for all $x\in\mathbb{R}^N$. 
\end{enumerate} 
We also suppose that
\begin{enumerate}
	\item [($AV$)]	$|A(y)|^2+V(y)<|A_\infty|^2+V_\infty$.
\end{enumerate}

The function $F$ is the primitive of the nonlinearity $f\colon \mathbb{R}\to\mathbb{R}$, which is non-negative in $(0,\infty)$ and satisfies, for any $r\in \left(\frac{2N-\alpha}{N},\frac{2N-\alpha}{N-2}\right)$,
\begin{enumerate}
	\item [($f1$)] $\displaystyle\lim_{t\to 0}\frac{f(t)}{t}=0$,
	\item [($f2$)] $\displaystyle\lim_{t\to\infty}\frac{f(t)}{t^{r-1}}=0$,
	\item [($f3$)] $\displaystyle\frac{f(t)}{t}$ is increasing if $t>0$ and decreasing if $t<0$.
\end{enumerate}

For example, if $t\in\mathbb{R}$, the functions $t \ln(1+|t|) $ and   
$|t|^{q_1-2}t+|t|^{q_2-2}t$ (where $2<q_1,q_2<r$) satisfy hypothesis ($f1$), ($f2$) and ($f3$).

We denote
\[\tilde{f}(t)=\left\{\begin{array}{cl}\displaystyle\frac{f(t)}{t},&\text{if }\ t\neq 0,\\
0, &\text{if }\ t=0.\end{array}\right.\]
Our hypotheses imply that $\tilde{f}$ is continuous. Therefore, problem \eqref{P} can be written in the form
\begin{equation}\label{Ptilde}
-(\nabla+iA(x))^2u+ V(x)u=\bigg(\frac{1}{|x|^{\alpha}}*F(|u|)\bigg)\tilde{f}(|u|){u}.
\end{equation}
The composition of $f$ and $F$ with $|u|$ gives a variational structure to the problem, allowing the application of the Mountain Pass Theorem. So, the right-hand side of problem \eqref{Ptilde} generalizes the term 
\begin{equation}\label{Cingolani}\bigg(\frac{1}{|x|^{\alpha}}*|u|^p)\bigg)|u|^{p-2}{u},
\end{equation}
which was studied by Cingolani, Clapp and Secchi in \cite{Cingolani}.  In some particular cases, similar forms of problem \eqref{Ptilde} were studied in \cite{CCS} and \cite{CSS}. 

Our aim in this paper is to prove the existence of a ground state solution for problem \eqref{Ptilde}. This is accomplished by showing that the mountain pass geometry is satisfied and then considering the asymptotic form of problem \eqref{Ptilde} and applying Struwe's splitting lemma.

The main part of the interesting paper by Cingolani, Clapp and Secchi \cite{Cingolani}  is devoted to the existence of multiple solutions of equation \eqref{Ptilde} - with \eqref{Cingolani} as the right-hand side - under the action  of a closed subgroup $G$ of the orthogonal group $O(N)$ of linear isometries of $\mathbb{R}^N$ if $A(gx)=gA(x)$ and $V(gx)=V(x)$ for all $g\in G$ and $x\in\mathbb{R}^N$. The authors look for solutions satisfying 
\[u(gx) =\tau(g)u(x),\quad\text{for all }\ g\in G\ \ \text{and }\ x\in\mathbb{R}^N,\]
where $\tau\colon G\to S^1$ is a given continuous group homomorphism into the unit complex numbers $S^1$. In this paper we also address the multiplicity of solutions in a particular case of that treated in \cite{Cingolani}.

We define 
\[\nabla_A u=\nabla u+iA(x)u\]
and consider the space
\[H^1_{A,V}(\mathbb{R}^N,\mathbb{C})=\left\{u\in L^2(\mathbb{R}^N,\mathbb{C})\,:\, \nabla_A u\in L^2(\mathbb{R}^N,\mathbb{C})\right\}\]
endowed with scalar product
\[\langle u,v\rangle_{A,V}=\mathfrak{Re}\int_{\mathbb{R}^N}\left(\nabla_A u\cdot \overline{\nabla_A v}+V(x)u\bar v\right)\]
and, therefore
\[\|u\|^2_{A,V}=\int_{\mathbb{R}^N}|\nabla_A u|^2+V|u|^2.\]

Observe that the norm generated by this scalar product is equivalent to the norm obtained by considering $V\equiv 1$, see \cite[Definition 7.20]{LiebLoss}.

If $u\in H^1_{A,V}(\mathbb{R}^N,\mathbb{C})$, then $|u|\in H^1(\mathbb{R}^N)$ and the \emph{diamagnetic inequality} is valid (see \cite[Theorem 7.21]{LiebLoss},\cite{Cingolani})
\[|\nabla |u|(x)|\leq |\nabla u(x)+iA(x)u(x)|,\ \ \textrm{a.e. } x\in\mathbb{R}^N.\]

As a consequence of the diamagnetic inequality, we have the continuous immersion
\begin{equation}\label{immersion}H^1_{A,V}(\mathbb{R}^N,\mathbb{C})\hookrightarrow L^q(\mathbb{R}^N,\mathbb{C})\end{equation}
for any $q\in [2,\frac{2N}{N-2}]$. We denote $2^*=\frac{2N}{N-2}$.

It is well-known that $C^\infty_c(\mathbb{R}^N,\mathbb{C})$ is dense in $H^1_{A,V}(\mathbb{R}^N,\mathbb{C})$, see \cite[Theorem 7.22]{LiebLoss}.
\begin{remark}\label{obs1} 
	It follows from $(f1)$-$(f2)$  that, for any fixed $\xi>0$, there exists a constant $C_\xi$ such that
	\begin{equation}\label{boundf}|f(t)|\leq\xi t+C_\xi t^{r-1},\quad\forall\ t\geq 0.\end{equation}
	Similarly,  there exists $D_\xi>0$ such that
	\[|F(t)| \leq \xi t^2 + D_\xi t^r,\quad \forall\ t \geq 0.\]

	Furthermore, $(f3)$ implies that $f$ satisfies the Ambrosetti-Rabinowitz inequality
	\begin{equation}\label{AR}2F(t)<f(t)t,\quad \forall\ t>0.\end{equation}
	Observe that the function $f(t)=t\ln(1+|t|)$ satisfies the last inequality, but does not satisfy $\theta F(t)\leq tf(t)$ for any $\theta>2$.
\end{remark}

We state our results:
\begin{thm}\label{t1}
	Suppose that $\alpha\in (0,N)$ and that conditions \textup{($V1$)-($V3$)}, \textup{($AV$)} and \textup{($f1$)-($f3$)} are valid. Then, problem \eqref{P} has a ground state solution.  
\end{thm}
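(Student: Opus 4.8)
The plan is to realise solutions of \eqref{Ptilde} as critical points of the energy functional
\[I(u)=\frac12\|u\|_{A,V}^2-\frac12\int_{\mathbb{R}^N}\Big(\frac{1}{|x|^\alpha}*F(|u|)\Big)F(|u|)\,\dd x\]
on $H^1_{A,V}(\mathbb{R}^N,\mathbb{C})$, and to produce a least-energy (ground state) critical point. First I would check that $I$ is well defined and of class $C^1$. The convolution term is controlled by the Hardy--Littlewood--Sobolev inequality: since $F(|u|)$ must lie in $L^{2N/(2N-\alpha)}$, the growth estimate $|F(t)|\le\xi t^2+D_\xi t^r$ from Remark~\ref{obs1}, together with the range $r\in\big(\frac{2N-\alpha}{N},\frac{2N-\alpha}{N-2}\big)$, places both $|u|^2$ and $|u|^r$ in the correct Lebesgue spaces, so the immersion \eqref{immersion} renders every term finite.

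Next I would verify the mountain pass geometry. The estimates in Remark~\ref{obs1}, the Hardy--Littlewood--Sobolev inequality and \eqref{immersion} give $I(u)\ge\beta>0$ on a small sphere $\|u\|_{A,V}=\rho$, while the Ambrosetti--Rabinowitz inequality \eqref{AR} forces $F$ to grow superquadratically, so $I(tu_0)\to-\infty$ along any fixed $u_0\neq0$, producing a point $e$ with $I(e)<0$. Let $c$ be the resulting mountain pass level. Using the monotonicity hypothesis $(f3)$, I would show that $c$ coincides with the Nehari level $\inf_{u\neq0}\max_{t>0}I(tu)$, hence with the infimum of $I$ over all nontrivial critical points; thus a critical point at level $c$ is automatically a ground state. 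The Mountain Pass Theorem then yields a Palais--Smale sequence at level $c$.

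The obstruction to compactness is the failure of \eqref{immersion} to be compact on $\mathbb{R}^N$, and this is the heart of the argument. I would introduce the problem at infinity, obtained by replacing $A$ and $V$ by their limits $A_\infty$ and $V_\infty$; a constant magnetic potential can be gauged away, so the limiting problem reduces to a translation-invariant Choquard equation $-\Delta v+V_\infty v=(\cdots)$, which possesses a ground state of energy $c_\infty$. The crucial step is the strict inequality $c<c_\infty$. I would prove it by evaluating $\max_{t>0}I(t\,\cdot\,)$ on a suitable test function built from a ground state of the problem at infinity; condition $(AV)$, namely $|A(y)|^2+V(y)<|A_\infty|^2+V_\infty$, is exactly what makes the magnetic energy of the original functional strictly smaller than that of the limiting one, giving $c<c_\infty$.

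Finally I would invoke Struwe's splitting lemma: a Palais--Smale sequence $(u_n)$ at level $c$ decomposes, up to a subsequence, as a weak limit $u_0$ (a critical point of $I$) plus finitely many translated bubbles $w_k(\cdot-y_n^k)$ with $|y_n^k|\to\infty$, each a nontrivial solution of the problem at infinity, and $c=I(u_0)+\sum_k I_\infty(w_k)$ with $I_\infty(w_k)\ge c_\infty>0$. If $u_0=0$, then since $c>0$ there is at least one bubble and $c\ge c_\infty$, contradicting $c<c_\infty$; hence $u_0\neq0$. Since $u_0$ is a nontrivial critical point, $I(u_0)\ge c$, which combined with $I(u_0)+\sum_k I_\infty(w_k)=c$ forces the bubbles to be absent and $I(u_0)=c$. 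Therefore $u_n\to u_0$ strongly, $u_0$ solves \eqref{Ptilde}, and $u_0$ is the desired ground state. I expect the main obstacle to be a twofold one: proving the precise form of the splitting lemma in the magnetic setting, where the gauge reduction and the translation of bubbles must be made compatible with the non-constant vector potential $A$, and extracting the strict inequality $c<c_\infty$ from $(AV)$.
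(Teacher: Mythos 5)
Your proposal is correct and follows essentially the same route as the paper: mountain pass geometry, identification of the mountain pass level with the Nehari level via ($f3$) so that a critical point at level $c$ is a ground state, the strict inequality $c<c_\infty$ extracted from ($AV$) by maximizing $J_{A,V}$ along a ground state of the problem at infinity, and a Struwe-type splitting lemma to rule out bubbling below $c_\infty$. The only deviation is that you gauge away the constant potential $A_\infty$ and appeal to an external existence result for the scalar autonomous Choquard equation, whereas the paper keeps $A_\infty$ in the limit problem \eqref{Pinfty} and obtains the required solution $\bar u$ from the splitting iteration itself (Remark \ref{remasymp}), so that no separate existence theorem for the limit problem is invoked.
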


In order to obtain our multiplicity result, we define the space
\[H^1_A(\mathbb{R}^N,\mathbb{C})^\tau=\left\{u\in H^1_A(\mathbb{R}^N,\mathbb{C})\,:\, u(gx)=\tau(g)u(x),\ \forall\ g\in G,\ \forall\ x\in \mathbb{R}^N\right\}\]
and suppose that the closed subgroup $G\subset O(N)$ satisfies the decomposition
\begin{equation}\label{decLions}G=O(N_1)\times O(N_2)\times\cdots\times O(N_k),\end{equation}
where $\sum_{j=1}^k N_j=N$, $N_j\geq 2$ for all $j\in\{1,\ldots,k\}$. Then we have
\begin{thm}\label{t2}Let $G$ be a closed subgroup of $O(N)$ satisfying the decomposition \eqref{decLions}. Assume that $A(gx)=gA(x)$ and $V(gx)=V(x)$ for all $g\in G$ and $x\in\mathbb{R}^N$. Then problem \eqref{Ptilde} has a sequence $(u_n)\subset H^1_A(\mathbb{R}^N,\mathbb{C})^\tau$ such that $\displaystyle\lim_{n\to\infty}\|u_n\|^2_{A,V}=\infty$.
\end{thm}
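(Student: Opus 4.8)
The plan is to work entirely inside the symmetric subspace $H^1_A(\mathbb{R}^N,\mathbb{C})^\tau$ and to produce, via an equivariant (genus-based) minimax scheme, an unbounded sequence of critical values of the energy functional
\[
I(u)=\frac12\|u\|_{A,V}^2-\frac12\int_{\mathbb{R}^N}\left(\frac{1}{|x|^\alpha}*F(|u|)\right)F(|u|),
\]
whose critical points are solutions of \eqref{Ptilde}. Since $F$ depends only on $|u|$, the functional is even, $I(-u)=I(u)$, which is exactly the $\mathbb{Z}_2$-symmetry required by the symmetric Mountain Pass Theorem of Rabinowitz. Moreover, under the hypotheses $A(gx)=gA(x)$ and $V(gx)=V(x)$, the action $g\cdot u=\tau(g)\,u(g^{-1}\cdot)$ leaves both $\|\cdot\|_{A,V}$ and the Choquard term invariant (using that $g\in O(N)$ preserves $|x|$ and $|\tau(g)|=1$), so $I$ is $G$-invariant and the space above is precisely its fixed-point set. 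The first step is therefore to invoke Palais' principle of symmetric criticality, which guarantees that a critical point of the restriction $I|_{H^1_A(\mathbb{R}^N,\mathbb{C})^\tau}$ is a genuine critical point of $I$, hence a weak solution of \eqref{Ptilde}.

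Next I would verify the two geometric ingredients of the symmetric minimax. The local geometry near the origin — the existence of $\rho,a>0$ with $I(u)\ge a$ whenever $\|u\|_{A,V}=\rho$ — follows exactly as in the proof of Theorem \ref{t1}: the growth bound $|F(t)|\le \xi t^2+D_\xi t^r$ from Remark \ref{obs1}, together with the Hardy--Littlewood--Sobolev inequality and the continuous embeddings \eqref{immersion}, bounds the Choquard term by a higher power of $\|u\|_{A,V}$. For the complementary condition I must show that $I\to-\infty$ on every finite-dimensional subspace of the symmetric space; this is where superquadraticity enters through the Ambrosetti--Rabinowitz inequality \eqref{AR}. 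The delicate point is that $f$ need not satisfy the stronger condition $\theta F(t)\le tf(t)$ (the example $t\ln(1+|t|)$ is precisely borderline), so instead of a clean power estimate I would integrate \eqref{AR} to obtain, for a fixed $u$ in a finite-dimensional subspace, a superquadratic lower bound on $\int(\tfrac{1}{|x|^\alpha}*F(|su|))F(|su|)$ as $s\to\infty$, sufficient to overwhelm the quadratic term $\tfrac12 s^2\|u\|_{A,V}^2$ on the (norm-equivalent) finite-dimensional subspace.

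The decisive step, and the one I expect to be the main obstacle, is the Palais--Smale condition for $I$ on the symmetric subspace at every positive level. Boundedness of a $(PS)_c$ sequence $(u_n)$ is standard from \eqref{AR} and the structure of $I$. Compactness is where the symmetry is used: for $u\in H^1_A(\mathbb{R}^N,\mathbb{C})^\tau$ the modulus $|u|$ is genuinely $G$-invariant (since $|\tau(g)|=1$), and the decomposition \eqref{decLions} with each $N_j\ge 2$ forces the $G$-orbits to go to infinity, which is exactly Lions' condition for the compact embedding $H^1_G(\mathbb{R}^N)\hookrightarrow L^q(\mathbb{R}^N)$ for $q\in(2,2^*)$. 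Transferring this through the diamagnetic inequality, I would obtain $|u_n|\to|u|$ strongly in $L^q$; combined with weak convergence of $u_n$ in the magnetic space and the $L^{2N/(2N-\alpha)}$-continuity of the Riesz potential, this yields convergence of the nonlinear term and hence strong convergence $u_n\to u$. The care required is adapting the scalar Lions lemma to the equivariant magnetic setting and checking that no mass escapes along orbits; this is what replaces the splitting-lemma analysis of Theorem \ref{t1} and restores full compactness.

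Finally, with $I$ even, of class $C^1$, $I(0)=0$, satisfying $(PS)_c$ for all $c>0$, the local geometry, and $I\to-\infty$ on finite-dimensional subspaces, the symmetric Mountain Pass Theorem produces an unbounded sequence of critical values $c_n\to\infty$. Let $u_n$ be associated critical points. Since the Choquard term is nonnegative, $I(u_n)\le \tfrac12\|u_n\|_{A,V}^2$, so $c_n\to\infty$ forces $\|u_n\|_{A,V}^2\ge 2c_n\to\infty$, which is the claimed conclusion.
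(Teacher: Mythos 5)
Your proposal is correct in substance, but it takes a genuinely different technical route from the paper, whose entire proof is one sentence: apply Theorem 10.10 of \cite{AM} (Ljusternik--Schnirelmann theory for an even, bounded-below functional satisfying (PS) on a complete symmetric manifold) to the Nehari manifold $\mathcal{N}_{A,V}$ --- implicitly intersected with $H^1_A(\mathbb{R}^N,\mathbb{C})^\tau$, and implicitly using Lemma \ref{lLions} to recover compactness there. The common core of the two arguments is the same: $G$-invariance of the functional plus symmetric criticality, evenness inherited from the dependence on $|u|$, and, decisively, Lions' compact embedding transferred through the diamagnetic inequality. The difference is constrained versus free minimax. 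The paper works on $\mathcal{N}_{A,V}$, where the functional is automatically bounded below (there $J_{A,V}(u)=\frac12 D'(u)\cdot u-D(u)\geq D(u)>0$ by \eqref{AR}), so no finite-dimensional geometry is needed; you work on the whole fixed-point space with Rabinowitz's symmetric Mountain Pass Theorem, which obliges you to prove that the functional tends to $-\infty$ on finite-dimensional subspaces and that (PS) holds at every positive level. Your route buys two real advantages: first, it avoids the Nehari manifold entirely, which matters because under (f1)--(f3) the nonlinearity $f$ is merely continuous, so $u\mapsto J'_{A,V}(u)\cdot u$ need not be $C^1$ and the paper's assertion that $\mathcal{N}_{A,V}$ is a smooth complete manifold (a hypothesis of the cited theorem) is actually delicate; second, divergence of the critical values is part of the symmetric MPT's conclusion, so your closing observation ($D\geq 0$ gives $\|u_n\|^2_{A,V}\geq 2c_n\to\infty$) finishes cleanly, whereas the paper must extract divergence of levels, not merely infinitude of critical points, from the LS scheme. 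Your handling of the two delicate points is also right: integrating \eqref{AR} as in Lemma \ref{gpm} gives $D(su)\geq s^4D(u)$ for $s\geq 1$, and since $D>0$ off the origin and the unit sphere of a finite-dimensional subspace is compact, this yields the uniform superquadratic blow-down even though the $\theta>2$ Ambrosetti--Rabinowitz condition fails; the same ``doubled'' structure of the Choquard term gives boundedness of (PS) sequences. Two details to pin down in a full write-up: strong $L^q$ convergence of the moduli $|u_n|$ does not by itself give strong $L^q$ convergence of the complex-valued $u_n$ --- combine it with a.e.\ convergence and generalized dominated convergence (or Brezis--Lieb); and both your argument and the paper's tacitly require $H^1_A(\mathbb{R}^N,\mathbb{C})^\tau$ to be infinite-dimensional, which holds for $\tau\equiv 1$ but fails (the space reduces to $\{0\}$) for every nontrivial $\tau$ when $G$ has the form \eqref{decLions} with all $N_j\geq 2$, since any continuous homomorphism $\tau\colon G\to S^1$ then factors through determinants and $SO(N_j)$-invariance forces radial dependence in each block.
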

The paper is organized as follows: Section \ref{mpg} shows the mountain pass geometry and some basic results concerning  the right-hand side of equation \eqref{Ptilde}. Theorem \ref{t1} is proved in Section \ref{GS} and our multiplicity result in Section \ref{multiplicity}. 

\section{Variational Formulation}\label{mpg}

The energy functional associated to problem \eqref{P} is given by
\begin{equation}
\label{energy}J_{A,V}(u)=\frac{1}{2}\Vert u\Vert^2_{A,V}-D(u),
\end{equation}
where $$D(u)=\frac{1}{2}\int_{\mathbb{R}^N}\bigg(\frac{1}{|x|^{\alpha}}*F(|u|)\bigg)F(|u|).$$

The energy functional is well-defined as a consequence of the Hardy-Little\-wood-Sobolev (see \cite[Theorem 4.3]{LiebLoss}, since
\begin{align}\label{estconv}\left|\int_{\mathbb{R}^N}\left(\frac{1}{|x|^\alpha}*F(|u|)\right)F(|u|)\right|\leq  C\left(\|u\|^{4}+\|u\|^{2r}\right).\end{align}

\begin{remark}
	\label{obs3} Let us consider the case $F(t)=|t|^r$. By applying the Hardy-Littlewood-Sobolev inequality we have that
	\[\int_{\mathbb{R}^{N}}\left(\frac{1}{|x|^\alpha}*F(|u|)\right)F(|u|)\]
	is well-defined if $F(|u|)\in L^p(\mathbb{R}^{N})$ for $p>1$ defined by
	\[\frac{2}{p}+\frac{\alpha}{N}=2\quad\Rightarrow\quad\frac{1}{p}=\frac{1}{2}\left(2-\frac{\alpha}{N}\right).\]
	Consequently, in order to apply the immersion \eqref{immersion}, we must have
	\begin{align*}pr\in [2,2^*]	&\Rightarrow\quad \frac{2N-\alpha}{N}\leq r\leq \frac{N}{N-2}\left(2-\frac{\alpha}{N}\right)=\frac{2N-\alpha}{N-2}.
	\end{align*}
	This condition (taking the open interval satisfied by $r$) justifies hypothesis ($f2$).
\end{remark}

Since the derivative of the energy functional $J_{A,V}(u)$ is given by
\begin{align*}
J'_{A,V}(u)\cdot \psi&=\langle u,\psi\rangle_{A,V}-D'(u)\cdot \psi\\
&=\langle u,\psi\rangle_{A,V}-\mathfrak{Re}\int_{\mathbb{R}^N}\bigg(\frac{1}{|x|^{\alpha}}*F(|u|)\bigg)\tilde{f}(|u|)u\bar{\psi}, 
\end{align*}
we see that critical points of $J'_{A,V}(u)$ are weak solutions of \eqref{Ptilde}.  Note that, if $\psi=u$ we obtain
\begin{equation}
J'_{A,V}(u)\cdot u:=\Vert u\Vert_{A,V}^{2}-\int_{\mathbb{R}^N}\bigg(\frac{1}{|x|^{\alpha}}*F(|u|)\bigg)f(|u|)|u|. 
\end{equation}

\begin{lemma}\label{gpm}
	The functional $J_{A,V}$ satisfies the Mountain Pass geometry. Precisely,
	\begin{enumerate}
		\item [$(i)$] there exist $\rho,\delta>0$ such that $J_{A,V}\big|_S\geq \delta>0$ for any $u\in S$, where
		\[S=\{u\in H^1_{A,V}(\mathbb{R}^{N},\mathbb{C})\,:\, \|u\|_{A,V}=\rho\};\]
		\item [$(ii)$] for any $u_0\in H^1_{A,V}(\mathbb{R}^{N},\mathbb{C})\setminus\{0\}$ there exists $\tau\in (0,\infty)$ such that $\|\tau u_0\|>\rho$ e  $J_{A,V}(\tau u_0) <0$.
	\end{enumerate}
\end{lemma}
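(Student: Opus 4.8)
The plan is to verify the two geometric conditions separately: part $(i)$ rests on the Hardy--Littlewood--Sobolev estimate \eqref{estconv}, which controls the nonlocal term by superquadratic powers of the norm near the origin, while part $(ii)$ rests on the growth of $F$ encoded in the Ambrosetti--Rabinowitz inequality \eqref{AR}, which forces $D$ to grow faster than $\|\cdot\|^2_{A,V}$ along rays.

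For part $(i)$ I would start from the definition of $J_{A,V}$ and estimate $D$ from above. Since $D(u)\ge 0$ and \eqref{estconv} gives a constant $C>0$ with $D(u)\le \frac{C}{2}\big(\|u\|_{A,V}^{4}+\|u\|_{A,V}^{2r}\big)$, one has
\[J_{A,V}(u)\ge \frac{1}{2}\|u\|_{A,V}^2-\frac{C}{2}\big(\|u\|_{A,V}^{4}+\|u\|_{A,V}^{2r}\big)=\frac{\|u\|_{A,V}^2}{2}\Big(1-C\big(\|u\|_{A,V}^{2}+\|u\|_{A,V}^{2r-2}\big)\Big).\]
Because $\alpha\in(0,N)$ forces $r>\frac{2N-\alpha}{N}=2-\frac{\alpha}{N}>1$, both exponents $2$ and $2r-2$ are strictly positive, so the parenthesis tends to $1$ as $\|u\|_{A,V}\to 0$. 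Choosing $\rho>0$ small enough that the parenthesis is at least $\tfrac12$ on the sphere $S$ yields $J_{A,V}\big|_S\ge \rho^2/4=:\delta>0$, which is $(i)$.

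For part $(ii)$ the crucial point is a quartic lower bound on $D(\tau u_0)$. First I would extract from \eqref{AR} that $t\mapsto F(t)/t^2$ is nondecreasing on $(0,\infty)$: with $F'=f$, the inequality \eqref{AR} reads $\frac{f(t)}{F(t)}>\frac{2}{t}$, i.e. $\frac{d}{dt}\log\!\big(F(t)/t^2\big)>0$; since $f>0$ on $(0,\infty)$ gives $F(t_0)>0$ for any $t_0>0$, this produces $F(t)\ge c\,t^2$ for all $t\ge t_0$ with $c=F(t_0)/t_0^2>0$. Now fix $u_0\ne 0$ and choose $\varepsilon\in(0,1)$ together with a bounded measurable set $\Omega$ of positive measure on which $\varepsilon\le|u_0|\le\varepsilon^{-1}$. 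For $\tau$ large enough that $\tau\varepsilon\ge t_0$, one has $F(\tau|u_0|)\ge c\,\tau^2|u_0|^2$ on $\Omega$; since the integrand defining $D$ is nonnegative (the Riesz kernel and $F$ are nonnegative), restricting both the convolution and the outer integral to $\Omega$ gives
\[D(\tau u_0)\ge \frac{c^2\tau^4}{2}\int_{\Omega}\int_{\Omega}\frac{|u_0(x)|^2|u_0(y)|^2}{|x-y|^{\alpha}}\,\dd x\,\dd y=:C_0\,\tau^4,\]
where $C_0$ is finite (because $\Omega$ is bounded, $|u_0|\le\varepsilon^{-1}$ there, and $\alpha<N$ makes the Riesz kernel locally integrable) and strictly positive (because $u_0$ does not vanish on $\Omega$). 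Hence $J_{A,V}(\tau u_0)\le \frac{\tau^2}{2}\|u_0\|_{A,V}^2-C_0\tau^4\to-\infty$, so for $\tau$ large both $\|\tau u_0\|_{A,V}=\tau\|u_0\|_{A,V}>\rho$ and $J_{A,V}(\tau u_0)<0$ hold, which is $(ii)$.

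I expect the only genuinely delicate step to be this quartic lower bound in $(ii)$: one must transfer the pointwise estimate $F(t)\ge c\,t^2$, valid only for large argument, into a lower bound on the doubly nonlocal functional $D$. The device that makes this clean is to localise to a bounded set $\Omega$ on which $|u_0|$ is bounded away from both $0$ and $\infty$, guaranteeing simultaneously the activation of the inequality $F\ge c\,t^2$ for large $\tau$ and the finiteness and positivity of the limiting double integral. Part $(i)$ is then routine once \eqref{estconv} is in hand, the monotonicity of $F(t)/t^2$ is an immediate consequence of \eqref{AR}, and the strict positivity of the exponents $2$ and $2r-2$ is exactly what produces the mountain pass shape near the origin.
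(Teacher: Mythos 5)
Your proof of part $(i)$ coincides with the paper's: both estimate $D$ from above by \eqref{estconv} and take $\rho$ small, so that the superquadratic powers are dominated by $\frac12\|u\|^2_{A,V}$ on the sphere. For part $(ii)$ you take a genuinely different, and correct, route. The paper argues at the level of the functional: it sets $g_{u_0}(t)=D\big(tu_0/\|u_0\|_{A,V}\big)$, uses the Ambrosetti--Rabinowitz inequality \eqref{AR} to obtain the differential inequality $g'_{u_0}(t)\geq \frac{4}{t}\,g_{u_0}(t)$, and integrates the logarithmic derivative to get the global quartic bound $D(\tau u_0)\geq M\big(\tau\|u_0\|_{A,V}\big)^4$. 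You instead convert \eqref{AR} into the pointwise statement that $F(t)/t^2$ is increasing, hence $F(t)\geq c\,t^2$ for $t\geq t_0$, and then localize to a bounded set $\Omega$ on which $\varepsilon\leq|u_0|\leq\varepsilon^{-1}$, so that the quartic growth of $D(\tau u_0)$ comes from a finite, strictly positive double integral over $\Omega\times\Omega$. Each approach has its advantages: the paper's ray computation (in particular $g'_u(t)>0$ and the shape of $t\mapsto J_{A,V}(tu)$) is reused verbatim right after Lemma \ref{lN} to get uniqueness of $t_u$ and the Nehari characterization $c=c^*$, so it is not wasted work; your argument is more elementary in that it never differentiates the nonlocal functional along the ray (no interchange of derivative and double integral to justify) and needs only the pointwise consequence of \eqref{AR} together with local integrability of the Riesz kernel ($\alpha<N$). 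Two small points you should make explicit to be complete: the existence of $\Omega$ (e.g.\ $\Omega=\{1/n\leq|u_0|\leq n\}\cap B_n(0)$ has positive measure for $n$ large because $u_0\neq 0$), and the strict positivity $f>0$ on $(0,\infty)$ -- the standing hypothesis only says $f$ is non-negative, but strict positivity follows from \eqref{AR} itself, since $f(t)t>2F(t)\geq 0$, or from ($f1$) and ($f3$); this is what guarantees $F(t_0)>0$ and hence $c>0$ in your bound.
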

\begin{proof} Inequality \eqref{estconv} yields
	\[J_{A,V}(u)\geq \frac{1}{2}\|u\|^2_{A,V}-C\left(\|u\|^{4}_{A,V}+\|u\|^{2r}_{A,V}\right),\]
	thus implying ($i$) when we take $\|u\|_{A,V}=\rho>0$ small enough.
	
	In order to prove ($ii$), fix $u_0\in H^1_{A,V}(\mathbb{R}^{N},\mathbb{C})\setminus\{0\}$ and consider the function $g_{u_0}\colon(0,\infty)\to\mathbb{R}$ given by
	\[g_{u_0}(t)=D\left(\frac{tu_0}{\|u_0\|_{A,V}}\right)=\frac{1}{2}\int_{\mathbb{R}^{N}}\left[\frac{1}{|x|^\alpha}*F\left(\frac{t|u_0|}{\|u_0\|_{A,V}}\right)\right]F\left(\frac{t|u_0|}{\|u_0\|_{A,V}}\right).\]
	We have	
	\begin{align*}
	g'_{u_0}(t)
	&=\int_{\mathbb{R}^{N}}\left[\frac{1}{|x|^\alpha}*F\left(\frac{t|u_0|}{\|u_0\|_{A,V}}\right)\right]f\left(\frac{t|u_0|}{\|u_0\|_{A,V}}\right)\frac{|u_0|}{\|u_0\|_{A,V}}\\
	&=\frac{4}{t}\int_{\mathbb{R}^{N}}\frac{1}{2}\left[\frac{1}{|x|^\alpha}*F\left(\frac{t|u_0|}{\|u_0\|_{A,V}}\right)\right]\frac{1}{2}f\left(\frac{t|u_0|}{\|u_0\|_{A,V}}\right)\frac{t|u_0|}{\|u_0\|_{A,V}}\\
	&\geq \frac{4}{t}g_{u_0}(t)\end{align*}
	as a consequence of the Ambrosetti-Rabinowitz condition \eqref{AR}.
	Observe that $g'_{u_0}(t)>0$ for $t>0$.
	
	Thus, 
	\begin{align*}
	\ln g_{u_0}(t)\Big|_1^{\tau\|u_0\|_{A,V}}\geq 4\ln t\Big|_1^{\tau\|u_0\|_{A,V}}\quad\Rightarrow\quad \frac{g_{u_0}(\tau\|u_0\|_{A,V})}{g_{u_0}(1)}\geq \left(\tau\|u_0\|_{A,V}\right)^{4},
	\end{align*}
	proving that
	\begin{align}\label{H}
	D(\tau u_0)=g_{u_0}(\tau\|u_0\|_{A,V})\geq M\left(\tau\|u_0\|_{A,V}\right)^{4}\end{align}
	for a constant $M>0$. So,
	\[J_{A,V}(\tau u_0)=\frac{\tau^2}{2}\|u_0\|^2_{A,V}-D\left(\tau u_0\right)\leq C_1\tau^2-C_2\tau^{4}\]
	yields that $J_{A,V}(\tau u_0)<0$ when $\tau$ is large enough.
	$\hfill\Box$\end{proof}\vspace*{.4cm}

The mountain pass theorem without the PS condition (see \cite[Teorema. 1.15]{Willem}) yields a Palais-Smale sequence $(u_n)\subset H^1_{A,V}(\mathbb{R}^{N},\mathbb{C})$ such that
\[J'_{A,V}(u_n)\to 0\qquad\textrm{and}\qquad J_{A,V}(u_n)\to c,\]
where
\[c=\inf_{\alpha\in \Gamma}\max_{t\in [0,1]}J_{A,V}(\alpha(t)),\]
and $\Gamma=\left\{\alpha\in C^1\left([0,1],H^1_{A,V}(\mathbb{R}^{N},\mathbb{C})\right)\,:\,\alpha(0)=0,\,\alpha(1)<0\right\}$.

We now consider the Nehari manifold
\begin{align*}\mathcal{N}_{A,V}&=\left\{u\in H^1_{A,V}(\mathbb{R}^{N},\mathbb{C})\setminus\{0\}\,:\,J'_{A,V}(u)\cdot u=0\right\}\\
&=\left\{u\in H^1_{A,V}(\mathbb{R}^{N},\mathbb{C})\setminus\{0\}\,:\,\|u\|^2_{A,V}=\int_{\mathbb{R}^{N}}\left(\frac{1}{|x|^\alpha}*F(|u|)\right)f(|u|)|u|\right\}.
\end{align*}
It is not difficult to see that $\mathcal{N}_{A,V}$ is a manifold in $H^1_{A,V}(\mathbb{R}^{N},\mathbb{C})\setminus\{0\}$. The next result shows that $\mathcal{N}_{A,V}$ is a closed manifold in $H^1_{A,V}(\mathbb{R}^{N},\mathbb{C})$.

\begin{lemma}\label{lN}
	There exists $\beta>0$ such that $\|u\|_{A,V}\geq \beta$ for all $u\in \mathcal{N}_{A,V}$.
\end{lemma}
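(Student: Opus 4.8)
The plan is to use the defining equation of the Nehari manifold together with the Hardy--Littlewood--Sobolev estimate that already underlies \eqref{estconv}. If $u\in\mathcal{N}_{A,V}$, then by definition
\[\|u\|^2_{A,V}=\int_{\mathbb{R}^{N}}\left(\frac{1}{|x|^\alpha}*F(|u|)\right)f(|u|)|u|,\]
so everything reduces to bounding the right-hand side from above by powers of $\|u\|_{A,V}$ that decay faster than $\|u\|^2_{A,V}$ as $\|u\|_{A,V}\to 0$.

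First I would record the growth estimates from Remark \ref{obs1}: one has $F(t)\leq\xi t^2+D_\xi t^r$ and, multiplying the bound on $|f|$ by $t$, also $f(t)t\leq\xi t^2+C_\xi t^r$ for every $t\geq 0$. Hence both $F(|u|)$ and $f(|u|)|u|$ are dominated by $|u|^2+|u|^r$. Feeding these into the Hardy--Littlewood--Sobolev inequality and the continuous immersion \eqref{immersion}, exactly as in the derivation of \eqref{estconv}, produces a constant $C>0$ with
\[\int_{\mathbb{R}^{N}}\left(\frac{1}{|x|^\alpha}*F(|u|)\right)f(|u|)|u|\leq C\left(\|u\|^{4}_{A,V}+\|u\|^{2r}_{A,V}\right).\]
The mixed contributions of order $\|u\|^{2+r}_{A,V}$ that appear when expanding the product of the two bounds are harmless, since $2+r$ is the average of $4$ and $2r$, so $\|u\|^{2+r}_{A,V}\leq\tfrac12\left(\|u\|^{4}_{A,V}+\|u\|^{2r}_{A,V}\right)$.

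Combining the two displays gives $\|u\|^2_{A,V}\leq C\left(\|u\|^{4}_{A,V}+\|u\|^{2r}_{A,V}\right)$, and dividing by $\|u\|^2_{A,V}>0$---legitimate because $u\neq 0$ on $\mathcal{N}_{A,V}$---yields
\[1\leq C\left(\|u\|^{2}_{A,V}+\|u\|^{2r-2}_{A,V}\right).\]
Since $\alpha<N$ forces $r>\tfrac{2N-\alpha}{N}>1$, both exponents $2$ and $2r-2$ are strictly positive, so the right-hand side tends to $0$ as $\|u\|_{A,V}\to 0^+$; thus there is $\beta>0$ for which the inequality can only hold when $\|u\|_{A,V}\geq\beta$, which is the claim. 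I do not anticipate a genuine obstacle here: the only step requiring care is checking that the Hardy--Littlewood--Sobolev argument applies to the mixed term $\left(\tfrac{1}{|x|^\alpha}*F(|u|)\right)f(|u|)|u|$ rather than to $\left(\tfrac{1}{|x|^\alpha}*F(|u|)\right)F(|u|)$ as in \eqref{estconv}, and since $f(|u|)|u|$ obeys the very same growth bound as $F(|u|)$, that verification is identical to the one already carried out.
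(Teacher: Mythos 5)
Your proof is correct. The paper actually states Lemma \ref{lN} without any proof, and your argument is exactly the standard one its framework presupposes: combine the Nehari identity with the Hardy--Littlewood--Sobolev/Sobolev bound already used for \eqref{estconv} and in Lemma \ref{gpm}$(i)$ (noting, as you do, that $f(|u|)|u|$ obeys the same growth bound as $F(|u|)$), divide by $\|u\|^2_{A,V}\neq 0$ to get $1\leq C\left(\|u\|^{2}_{A,V}+\|u\|^{2r-2}_{A,V}\right)$, and use $r>1$ to extract the uniform lower bound $\beta$.
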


Another characterization of $c$ in terms of the Nehari manifold is now standard: for $u\neq 0$, consider the function $\Phi(t)=(1/2)\|tu\|^2_{A,V} -D(tu)$, preserving the notation of Lemma \ref{gpm}. The proof of Lemma \ref{gpm} assures that $\Phi(tu)>0$ for $t$ small enough, $\Phi(tu)<0$ for $t$ large enough and $g'_u(t)>0$ if $t>0$. Therefore, $\max_{t\geq 0}\Phi(t)$ is achieved at a unique $t_u=t(u)>0$ and $\Phi'(tu)>0$ for $t<t_u$ and $\Phi'(tu)<0$ for $t>t_u$. Furthermore, $\Psi'(t_uu)=0$ implies that $t_uu\in \mathcal{N}_{A,V}$.

The map $u\mapsto t_u$ ($u\neq 0$) is continuous and $c=c^*$, where
\[c^*=\inf_{u\in H^1_{A,V}(\mathbb{R}^{N},\mathbb{C})\setminus\{0\}}\max_{t\geq 0} J_{A,V}(tu).\] For details, see \cite[Section 3]{Rabinowitz} or \cite{Felmer}.

Standard arguments prove the next affirmative:
\begin{lemma}\label{bounded}
	Let $(u_n)\subset H^1_{A,V}(\mathbb{R}^{N},\mathbb{C})$ be a sequence such that $J_{A,V}(u_n)\to c$ and $J'_{A,V}(u_n)\to 0$, where
	\[c=\inf_{u\in H^1_{A,V}(\mathbb{R}^{N},\mathbb{C})\setminus\{0\}}\max_{t\geq 0} J_{A,V}(tu).\]
	Then $(u_n)$ is bounded and (for a subsequence) $u_n\rightharpoonup u_0$ in $H^1_{A,V}(\mathbb{R}^{N},\mathbb{C})$.
\end{lemma}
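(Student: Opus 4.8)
The plan is to follow the classical strategy for Palais--Smale sequences, forming a suitable linear combination of $J_{A,V}(u_n)$ and $J'_{A,V}(u_n)\cdot u_n$ that is, on the one hand, bounded above and, on the other hand, controls a positive multiple of $\|u_n\|^2_{A,V}$ from below. The only delicate point is the choice of the multiplier. Since $f(t)=t\ln(1+|t|)$ does not satisfy $\theta F(t)\le tf(t)$ for any $\theta>2$ (Remark \ref{obs1}), the usual Ambrosetti--Rabinowitz argument with $\theta\in(2,\,\cdot\,]$ is unavailable. I would instead exploit the nonlocal structure: because $D(u)$ depends \emph{quadratically} on $F$ through the convolution, the correct multiplier is $\theta=4$, and the inequality \eqref{AR}, namely $2F(t)<f(t)t$, is precisely what is needed at this value.

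Concretely, first I would compute, for the given sequence,
\begin{equation*}
J_{A,V}(u_n)-\frac{1}{4}J'_{A,V}(u_n)\cdot u_n=\frac{1}{4}\|u_n\|^2_{A,V}+\frac{1}{2}\int_{\mathbb{R}^N}\left(\frac{1}{|x|^\alpha}*F(|u_n|)\right)\left(\frac{1}{2}f(|u_n|)|u_n|-F(|u_n|)\right).
\end{equation*}
Since $f\ge 0$ on $(0,\infty)$ gives $F(|u_n|)\ge 0$, the convolution factor $\frac{1}{|x|^\alpha}*F(|u_n|)$ is nonnegative, while \eqref{AR} gives $\frac{1}{2}f(t)t-F(t)>0$ for $t>0$. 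Hence the integral is nonnegative and
\begin{equation*}
J_{A,V}(u_n)-\frac{1}{4}J'_{A,V}(u_n)\cdot u_n\ge \frac{1}{4}\|u_n\|^2_{A,V}.
\end{equation*}

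Next, I would bound the left-hand side from above. As $J_{A,V}(u_n)\to c$, the sequence $J_{A,V}(u_n)$ is bounded, and $J'_{A,V}(u_n)\to 0$ yields $|J'_{A,V}(u_n)\cdot u_n|\le \|J'_{A,V}(u_n)\|\,\|u_n\|_{A,V}=o(1)\|u_n\|_{A,V}$. Combining the two estimates gives, for large $n$,
\begin{equation*}
\frac{1}{4}\|u_n\|^2_{A,V}\le C+o(1)\|u_n\|_{A,V},
\end{equation*}
a quadratic inequality in $\|u_n\|_{A,V}$ that forces $(u_n)$ to be bounded. Finally, since $H^1_{A,V}(\mathbb{R}^N,\mathbb{C})$ is a Hilbert space, hence reflexive, a bounded sequence admits a weakly convergent subsequence, giving $u_n\rightharpoonup u_0$ after passing to a subsequence.

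The main obstacle is conceptual rather than computational: recognizing that the absence of a strong Ambrosetti--Rabinowitz condition is harmless here because the Choquard nonlinearity effectively doubles the homogeneity, so that the multiplier $\theta=4$ together with the weak inequality \eqref{AR} suffices to produce the coercive term $\frac{1}{4}\|u_n\|^2_{A,V}$. Once this matching is identified, the remaining steps are routine.
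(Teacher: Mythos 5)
Your proof is correct and is exactly the ``standard arguments'' that the paper invokes without writing out (the paper gives no proof of Lemma \ref{bounded}): the identity $J_{A,V}(u_n)-\tfrac{1}{4}J'_{A,V}(u_n)\cdot u_n=\tfrac{1}{4}\|u_n\|^2_{A,V}+\tfrac{1}{4}\int\left(\frac{1}{|x|^\alpha}*F(|u_n|)\right)\left(f(|u_n|)|u_n|-2F(|u_n|)\right)$, whose integral term is nonnegative by $F\geq 0$ and \eqref{AR}, combined with the dual-norm estimate and weak compactness in a Hilbert space. Your key observation --- that the quadratic dependence of $D$ on $F$ makes the multiplier $\tfrac{1}{4}$ work with only the weak inequality $2F(t)<f(t)t$, so no stronger Ambrosetti--Rabinowitz condition is needed --- is precisely the right adaptation and is consistent with how the paper uses \eqref{AR} elsewhere (e.g.\ in Lemma \ref{gpm} and Lemma \ref{PS}).
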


\begin{lemma}\label{lK}
	Let $U\subseteqq \mathbb{R}^{N}$ be any open set. For $1<p<\infty$, let $(f_n)$ be a bounded sequence in $L^p(U,\mathbb{C})$ such that $f_n(x)\to f(x)$ a.e. Then $f_n\rightharpoonup f$.
\end{lemma}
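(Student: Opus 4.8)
The plan is to prove this classical fact in three movements: first establish that the a.e.\ limit $f$ actually lies in $L^p(U,\mathbb{C})$, then exploit the reflexivity of $L^p$ for $1<p<\infty$ to reduce the problem to identifying subsequential weak limits, and finally show that any such weak limit must coincide with $f$.

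First I would apply Fatou's lemma to $|f_n|^p\to|f|^p$ a.e.\ to get $\int_U|f|^p\le\liminf_n\int_U|f_n|^p\le M^p<\infty$, where $M=\sup_n\|f_n\|_{L^p}$; hence $f\in L^p(U,\mathbb{C})$ and the weak convergence statement makes sense. Since $1<p<\infty$, the space $L^p(U,\mathbb{C})$ is reflexive, so every subsequence of the bounded sequence $(f_n)$ admits a further subsequence converging weakly to some $h\in L^p(U,\mathbb{C})$. By the standard subsequence principle in reflexive spaces---if every weakly convergent subsequence has the same weak limit $f$, then the whole sequence converges weakly to $f$---it suffices to prove that each such $h$ equals $f$.

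To identify $h=f$, I would test against indicator functions of finite-measure sets. Fix a measurable $E\subseteq U$ with $|E|<\infty$. Then $\chi_E\in L^{p'}(U,\mathbb{C})$ with $p'=p/(p-1)$, so the weak convergence $f_{n_k}\rightharpoonup h$ gives $\int_E f_{n_k}\to\int_E h$. On the other hand, Hölder's inequality yields $\int_S|f_{n_k}|\le|S|^{1/p'}M$ for every measurable $S$, so the restrictions $(f_{n_k}|_E)$ are uniformly integrable on the finite-measure set $E$; combined with $f_{n_k}\to f$ a.e., Vitali's convergence theorem gives $f_{n_k}\to f$ in $L^1(E)$ and therefore $\int_E f_{n_k}\to\int_E f$. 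Hence $\int_E h=\int_E f$ for every finite-measure set $E$, which forces $h=f$ a.e.\ (applying this to real and imaginary parts). This identification step is the crux of the argument: the delicate point is that a.e.\ convergence alone is far too weak to control integrals, and it is precisely the $L^p$-boundedness with $p>1$ that upgrades it---through uniform integrability and Vitali's theorem---to the $L^1$-convergence on finite-measure sets needed to pin down the weak limit. Once $h=f$ for every subsequential weak limit, the subsequence principle closes the proof.
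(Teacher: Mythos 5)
Your proof is correct and complete. It is worth noting, however, that the paper does not actually write out a proof of this lemma: it simply refers to \cite[Lemme 4.8, Chapitre 1]{Kavian}, where the analogous statement is proved for real-valued functions, and asks the reader to adapt that argument to the complex case. Your argument therefore supplies in full what the paper leaves to a citation. The overall skeleton is the same classical one (boundedness plus reflexivity gives weakly convergent subsequences; the whole work is in identifying every subsequential weak limit with the a.e.\ limit, after which the subsequence principle concludes), but your identification step differs from the usual textbook route: rather than invoking Egorov's theorem to test against functions supported where the convergence is nearly uniform, or Mazur's lemma to produce strongly convergent convex combinations, you test against characteristic functions of finite-measure sets and upgrade the a.e.\ convergence to $L^1(E)$-convergence via H\"older-type uniform integrability and Vitali's theorem. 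This is a clean and slightly more quantitative variant; it also makes transparent exactly where the hypothesis $p>1$ enters (it is what gives the uniform integrability $\int_S |f_{n_k}| \le M|S|^{1/p'}$). Two small points you handled correctly and should keep explicit: the a.e.\ limit $f$ must first be placed in $L^p$ (your Fatou step), and the final deduction $h=f$ from $\int_E(h-f)=0$ uses the $\sigma$-finiteness of $U$ together with the separation into real and imaginary parts, which is precisely the ``adaptation to the complex case'' the paper alludes to.
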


The proof of Lemma \ref{lK} follows by adapting the arguments given for the real case, as in \cite[Lemme 4.8, Chapitre 1]{Kavian}.\vspace*{.2cm}

\begin{lemma}\label{lemmaw}
	Suppose that $u_n\rightharpoonup u_0$ in $H^1_{A,V}(\mathbb{R}^N,\mathbb{C})$ and $u_n(x)\to u_0(x)$ a.e. in $\mathbb{R}^N$. Then
	\begin{equation}\label{F}\frac{1}{|x|^\alpha}*F(|u_n(x)|)\rightharpoonup \frac{1}{|x|^\alpha}*F(|u_0(x)|\quad\text{in }\ L^{2N/\alpha}(\mathbb{R}^N).\end{equation}
\end{lemma}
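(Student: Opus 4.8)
The plan is to split \eqref{F} into two independent facts: that $F(|u_n|)$ converges weakly in the Lebesgue space dictated by the Hardy--Littlewood--Sobolev (HLS) inequality, and that convolution with $|x|^{-\alpha}$ is a bounded linear map into $L^{2N/\alpha}(\mathbb{R}^N)$, hence continuous for the weak topologies. First I would pin down the exponents. By HLS the natural space for $F(|u|)$ is $L^{p}(\mathbb{R}^N)$ with $p=\tfrac{2N}{2N-\alpha}$ (the exponent $p$ of Remark \ref{obs3}), and convolution with $|x|^{-\alpha}$ maps $L^{p}$ into $L^{q}$ with $\tfrac1q=\tfrac1p+\tfrac{\alpha}{N}-1=\tfrac{\alpha}{2N}$, i.e. $q=\tfrac{2N}{\alpha}$, which is exactly the target space.

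For the first fact, since $u_n\rightharpoonup u_0$ the sequence $(u_n)$ is bounded in $H^1_{A,V}(\mathbb{R}^N,\mathbb{C})$. Using the growth bound $|F(t)|\le \xi t^2+D_\xi t^{r}$ from Remark \ref{obs1} together with the immersion \eqref{immersion}, one estimates $\|F(|u_n|)\|_{p}\le \xi\,\||u_n|\|_{2p}^2+D_\xi\,\||u_n|\|_{rp}^{r}$, where the exponents $2p$ and $rp$ fall in the admissible range $[2,2^*]$ precisely because $r\in\big(\tfrac{2N-\alpha}{N},\tfrac{2N-\alpha}{N-2}\big)$ (this is the computation behind \eqref{estconv} and Remark \ref{obs3}). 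Hence $\big(F(|u_n|)\big)$ is bounded in $L^{p}(\mathbb{R}^N)$. On the other hand, since $u_n(x)\to u_0(x)$ a.e. and $t\mapsto F(|t|)$ is continuous, we get $F(|u_n(x)|)\to F(|u_0(x)|)$ a.e. Combining $L^{p}$-boundedness with a.e. convergence and applying Lemma \ref{lK} (valid since $1<p<\infty$) yields $F(|u_n|)\rightharpoonup F(|u_0|)$ in $L^{p}(\mathbb{R}^N)$.

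To finish, write $Tg=\tfrac{1}{|x|^\alpha}*g$. By HLS, $T\colon L^{p}\to L^{2N/\alpha}$ is a bounded linear operator, so it is continuous from the weak topology of $L^{p}$ to that of $L^{2N/\alpha}$: for every bounded linear functional $\Lambda$ on $L^{2N/\alpha}$, $\Lambda\circ T$ is a bounded linear functional on $L^{p}$, whence $\Lambda\big(TF(|u_n|)\big)\to\Lambda\big(TF(|u_0|)\big)$, which is exactly \eqref{F}. Equivalently one may test directly: for $\phi\in L^{2N/(2N-\alpha)}$ (the dual exponent of $2N/\alpha$), Fubini and the evenness of the kernel give $\int\big(\tfrac{1}{|x|^\alpha}*F(|u_n|)\big)\phi=\int F(|u_n|)\big(\tfrac{1}{|x|^\alpha}*\phi\big)$, and since $\tfrac{1}{|x|^\alpha}*\phi\in L^{2N/\alpha}=(L^{p})^*$ by HLS, the weak convergence of the previous step passes to the limit. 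The only genuinely delicate point is verifying that the two exponents produced by the growth bound on $F$ lie in the Sobolev range $[2,2^*]$, which is what the restriction on $r$ guarantees; everything else is the standard ``bounded $+$ a.e. convergent $\Rightarrow$ weakly convergent'' principle (Lemma \ref{lK}) followed by weak-to-weak continuity of a bounded linear operator.
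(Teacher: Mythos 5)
Your proof is correct, and it follows the same two-step skeleton as the paper's own argument: (i) the growth bound on $F$ plus a.e.\ convergence plus Lemma \ref{lK} give $F(|u_n|)\rightharpoonup F(|u_0|)$ in $L^p(\mathbb{R}^N)$, and (ii) the Riesz potential is a bounded linear operator into $L^{2N/\alpha}(\mathbb{R}^N)$ by Hardy--Littlewood--Sobolev, so the weak convergence passes through. Two of your details actually improve on the paper. First, your source exponent $p=\frac{2N}{2N-\alpha}$ is the correct one (consistent with Remark \ref{obs3}); the paper asserts boundedness of $F(|u_n|)$ in $L^{\frac{2N-\alpha}{N-2}}(\mathbb{R}^N)$, which is a slip, since convolution with $|x|^{-\alpha}$ does not map that space into $L^{2N/\alpha}(\mathbb{R}^N)$. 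Second, for the final step the paper invokes ``a new application of Lemma \ref{lK},'' which strictly speaking would require a.e.\ convergence of the convolutions $\frac{1}{|x|^\alpha}*F(|u_n|)$ --- something never established. Your mechanism --- a bounded linear operator is weak-to-weak continuous, verified either through the adjoint or by testing against $\phi\in L^{2N/(2N-\alpha)}$ and using Fubini --- is the rigorous way to finish, and it is genuinely the right repair of the paper's sketch.

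One caveat, which you partially flagged yourself: the claim that both exponents $2p$ and $rp$ lie in $[2,2^*]$ ``because of the restriction on $r$'' is only half true. The restriction on $r$ handles $rp$, but $2p=\frac{4N}{2N-\alpha}\leq 2^*$ is equivalent to $\alpha\leq 4$, so for $N\geq 5$ and $\alpha\in(4,N)$ your estimate $\|F(|u_n|)\|_p\leq \xi\||u_n|\|_{2p}^2+D_\xi\||u_n|\|_{rp}^r$ involves a norm not controlled by the $H^1$-immersion \eqref{immersion}. This defect is inherited from the paper itself (the term $\|u\|^4$ in \eqref{estconv} has exactly the same problem), and it is easily fixed: since $(f1)$ gives $|F(t)|\leq \xi t^2$ for small $t$ and $(f2)$ gives $|F(t)|\leq C t^r$ for large $t$, one has in fact $|F(t)|\leq \xi t^{s}+C_\xi t^{r}$ with $s=\frac{2N-\alpha}{N}\leq 2$, and then $sp=2$ and $rp\in(2,2^*)$, so the $L^p$-boundedness holds for every $\alpha\in(0,N)$. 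With that sharpened growth bound inserted, your proof is complete and, on the two delicate points above, more precise than the one in the paper.
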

\begin{proof}
	In this proof we adapt some ideas of \cite{Alves}.
	
	The growth condition implies that $F(|u_n|)$ is bounded in $L^{\frac{2N-\alpha}{N-2}}(\mathbb{R}^N)$. Since we can suppose that $u_n(x)\to u_0(x)$ a.e. in $\mathbb{R}^N$, it follows from the continuity of $F$ that $F(|u_n(x)|)\to F(|u_0(x)|)$. From Lemma \ref{lK} follows \[F(|u_n(x)|)\rightharpoonup F(|u_0(x)|).\] 
	
	As a consequence of the Hardy-Littlewood-Sobolev inequality, we have that
	\[\frac{1}{|x|^\alpha}*w(x)\in L^{2N/\alpha}(\mathbb{R}^N)\]
	for all $w\in L^{\frac{2N-\alpha}{N-2}}(\mathbb{R}^N)$; this is a bounded linear operator from $L^{\frac{2N-\alpha}{N-2}}(\mathbb{R}^N)$ to $L^{2N/\alpha}(\mathbb{R}^N)$. A new application of Lemma \ref{lK} yields \eqref{F}.
	$\hfill\Box$\end{proof}
\begin{corollary}\label{corD}Consider
	$$D(u)=\frac{1}{2}\int_{\mathbb{R}^N}\bigg(\frac{1}{|x|^{\alpha}}*F(|u|)\bigg)F(|u|).$$
	
	If $u_n\rightharpoonup u_0$ in $H^1_{A,V}(\mathbb{R}^N,\mathbb{C})$ and $u_n(x)\to u_0(x)$ a.e. in $\mathbb{R}^N$, then $D(u_n)\to D(u_0)$ and $D(u_n-u_0)\to 0$.
\end{corollary}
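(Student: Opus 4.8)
The plan is to read $2D(u)=\int_{\mathbb{R}^N}\big(\frac{1}{|x|^\alpha}*F(|u|)\big)F(|u|)$ as a symmetric Hardy--Littlewood--Sobolev pairing of the two factors $\frac{1}{|x|^\alpha}*F(|u|)$ and $F(|u|)$, which lie in the conjugate Lebesgue spaces $L^{2N/\alpha}(\mathbb{R}^N)$ and $L^{2N/(2N-\alpha)}(\mathbb{R}^N)$. Lemma \ref{lemmaw} already supplies the weak convergence of the first factor, so the whole corollary reduces to passing to the limit in a product $\int A_nB_n$ in which $A_n\rightharpoonup A$ weakly in $L^{2N/\alpha}$.

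For $D(u_n)\to D(u_0)$ I would set $A_n=\frac{1}{|x|^\alpha}*F(|u_n|)$, $A=\frac{1}{|x|^\alpha}*F(|u_0|)$, $B_n=F(|u_n|)$, $B=F(|u_0|)$ and split
\[
\int A_nB_n-\int A\,B=\int A_n\,(B_n-B)+\int (A_n-A)\,B .
\]
The second term tends to $0$ by the definition of weak convergence in $L^{2N/\alpha}$ (Lemma \ref{lemmaw}), because $B\in L^{2N/(2N-\alpha)}$ is fixed. For the first term Hölder's inequality gives $\big|\int A_n(B_n-B)\big|\le\|A_n\|_{2N/\alpha}\|B_n-B\|_{2N/(2N-\alpha)}$, and since $(A_n)$ is bounded this vanishes provided $F(|u_n|)\to F(|u_0|)$ \emph{strongly} in $L^{2N/(2N-\alpha)}$. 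Thus everything is reduced to a single strong-convergence statement.

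Establishing that strong convergence is where I expect the real difficulty to lie. Continuity of $F$ and $u_n\to u_0$ a.e.\ give $F(|u_n|)\to F(|u_0|)$ a.e., while the growth estimate of Remark \ref{obs1}, the boundedness of $(u_n)$ and the immersion \eqref{immersion} show that $(F(|u_n|))$ is bounded in $L^{2N/(2N-\alpha)}$. Upgrading a.e.\ convergence of an $L^{2N/(2N-\alpha)}$-bounded sequence to strong convergence is the main obstacle: on all of $\mathbb{R}^N$ there is no compact embedding to exploit, so one must run a uniform-integrability (Vitali) argument that simultaneously controls the concentration of $|F(|u_n|)|^{2N/(2N-\alpha)}$ on small sets and, crucially, prevents mass from escaping to infinity. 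This is the step that uses the genuine structure of the sequence rather than weak convergence alone.

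The second assertion $D(u_n-u_0)\to 0$ follows the same scheme applied to $v_n=u_n-u_0$. Since $v_n\rightharpoonup 0$ and $v_n\to 0$ a.e., Lemma \ref{lemmaw} gives $\frac{1}{|x|^\alpha}*F(|v_n|)\rightharpoonup 0$ in $L^{2N/\alpha}$, and continuity of $F$ gives $F(|v_n|)\to 0$ a.e.; pairing the weakly vanishing factor against $F(|v_n|)$ and again invoking the same strong $L^{2N/(2N-\alpha)}$ convergence yields the limit $0$. In both parts the analytic heart is the identical compactness upgrade, after which the passage to the limit in the bilinear form is routine.
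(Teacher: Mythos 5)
Your decomposition of $D(u_n)-D(u_0)$ is exactly the paper's own (by symmetry of the convolution pairing, your term $\int(A_n-A)B$ is the paper's $\int\big(\frac{1}{|x|^\alpha}*[F(|u_n|)-F(|u_0|)]\big)F(|u_0|)$), and your treatment of that cross term --- testing the weak convergence of Lemma \ref{lemmaw} against the fixed dual element $F(|u_0|)\in L^{2N/(2N-\alpha)}(\mathbb{R}^N)$ --- is correct, and indeed more careful than what the paper writes. The gap is the diagonal term: you reduce $\int A_n(B_n-B)$ to the \emph{strong} convergence $F(|u_n|)\to F(|u_0|)$ in $L^{2N/(2N-\alpha)}(\mathbb{R}^N)$ and then leave that claim unproved, saying only that some Vitali/uniform-integrability argument ``preventing mass from escaping to infinity'' should yield it. Nothing in the hypotheses of the corollary (weak $H^1_{A,V}$ convergence plus a.e.\ convergence) gives any control on escape of mass to infinity, so this is not a deferrable verification: it is the entire content of the statement.

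In fact the step you left open is false, not merely unproven. Take $u_0=0$, fix $\phi\in C^\infty_c(\mathbb{R}^N,\mathbb{C})$, $\phi\neq 0$, a unit vector $e\in\mathbb{R}^N$, and set $u_n(x)=\phi(x-ne)$. Since $A$ and $V$ are bounded, $\|\cdot\|_{A,V}$ is equivalent to the usual $H^1$ norm, so $u_n\rightharpoonup 0$ in $H^1_{A,V}(\mathbb{R}^N,\mathbb{C})$ and $u_n\to 0$ a.e.; yet $\|F(|u_n|)\|_{L^{2N/(2N-\alpha)}}=\|F(|\phi|)\|_{L^{2N/(2N-\alpha)}}$ is a nonzero constant (note $F(t)>0$ for $t>0$ by \eqref{AR}), and by translation invariance of the double integral
\[
\int_{\mathbb{R}^N}\Big(\frac{1}{|x|^\alpha}*F(|u_n|)\Big)F(|u_n|)=\int_{\mathbb{R}^N}\Big(\frac{1}{|x|^\alpha}*F(|\phi|)\Big)F(|\phi|)\not\to 0 ,
\]
so the diagonal term does not vanish along such sequences and no compactness upgrade can be extracted from the stated hypotheses. (The same example shows that the corollary as stated --- and the paper's own one-line assertion that ``both integrals go to zero'' from boundedness plus a.e.\ convergence --- cannot be right in this generality; what is true, and what splitting arguments actually use, is the Brezis--Lieb-type relation $D(u_n)-D(u_n-u_0)-D(u_0)\to 0$, or else one works with sequences for which translation invariance has already been broken, as inside the proof of Lemma \ref{Struwe}.) You correctly isolated the crux of the matter, but your proposal does not --- and under these hypotheses cannot --- close it.
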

\begin{proof}
	\begin{align}\label{convol}
	D(u_n)-D(u_0)&=\int_{\mathbb{R}^{N}}\!\left(\frac{1}{|x|^\alpha}*F(|u_n|)\right)F(|u_n|)-\int_{\mathbb{R}^{N}}\!\left(\frac{1}{|x|^\alpha}*F(|u_0|)\right)F(|u_0|)\notag\\
	&=\int_{\mathbb{R}^{N}}\left(\frac{1}{|x|^\alpha}*F(|u_n|)\right)\left[F(|u_n|)-F(u_0)\right]\notag\\
	&\qquad +\int_{\mathbb{R}^{N}}\left(\frac{1}{|x|^\alpha}*\left[F(|u_n|)-F(|u_0|)\right]\right)F(|u_0|).
	\end{align}It follows from Lemma \ref{lemmaw} that
	\[\left(\frac{1}{|x|^\alpha}*F(|u_n|)\right)\] 
	is bounded. Since $F$ is continuous, we have $F(|u_n(z)|)- F(|u_0(z)|)=0$ a.e. in $\mathbb{R}^N$. So, both integrals in \eqref{convol} go to zero when $n\to\infty$ and we are done.
	$\hfill\Box$\end{proof}

\begin{corollary}\label{lemmaconvderiv}Suppose that $u_n\rightharpoonup u_0$ and consider 
	\begin{align*}D'(u_n)\cdot\psi=\mathfrak{Re}\int_{\mathbb{R}^N} \left[\frac{1}{|x|^\alpha} * F(|u_n|)\right] \tilde{f}(|u_n|)(u_n)\overline{\psi},
	\end{align*}
	for $\psi\in C^\infty_c(\mathbb{R}^{N},\mathbb{C})$. 
	Then $D'(u_n)\cdot\psi\to D'(u_0)\cdot\psi$.
\end{corollary}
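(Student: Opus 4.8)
The plan is to estimate the difference $D'(u_n)\cdot\psi-D'(u_0)\cdot\psi$ by separating the convolution factor from the nonlinear factor. Writing $w_n=\frac{1}{|x|^\alpha}*F(|u_n|)$ and $g_n=\tilde f(|u_n|)u_n\overline{\psi}$ (so that $g_0=\tilde f(|u_0|)u_0\overline{\psi}$), and letting $q=2N/\alpha$ with conjugate exponent $q'=2N/(2N-\alpha)$, I would write
\[
D'(u_n)\cdot\psi-D'(u_0)\cdot\psi
=\mathfrak{Re}\int_{\mathbb{R}^N}w_n\,(g_n-g_0)
+\mathfrak{Re}\int_{\mathbb{R}^N}(w_n-w_0)\,g_0 ,
\]
and show that each of the two terms tends to $0$. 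Since the claim concerns the full sequence whereas the a.e.\ convergence needed below is only available along subsequences, I would argue by the subsequence principle: it suffices to show that every subsequence of $(u_n)$ admits a further subsequence along which this difference vanishes, the limit $D'(u_0)\cdot\psi$ being always the same.

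The second term is the easier one. By Lemma \ref{lemmaw} we have $w_n\rightharpoonup w_0$ weakly in $L^{q}(\mathbb{R}^N)$, so it suffices to check that the fixed function $g_0$ lies in the dual space $L^{q'}(\mathbb{R}^N)$. Because $\psi$ has compact support $K$ and, by Remark \ref{obs1}, $|g_0|=|f(|u_0|)|\,|\psi|\le(\xi|u_0|+C_\xi|u_0|^{r-1})|\psi|$, the integrability $g_0\in L^{q'}(K)$ follows from the immersion \eqref{immersion} (and the boundedness of $K$) once one checks that both exponents $q'$ and $(r-1)q'$ stay below $2^*$; the upper bound on $r$ from $(f2)$ is precisely what guarantees $(r-1)q'<2^*$. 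Then $\int(w_n-w_0)g_0\to 0$ directly from the definition of weak convergence.

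For the first term I would use that weakly convergent sequences are bounded, so $\|w_n\|_{q}$ is bounded (this is also how Lemma \ref{lemmaw} is set up through the Hardy--Littlewood--Sobolev inequality). By Hölder's inequality
\[
\left|\int_{\mathbb{R}^N}w_n\,(g_n-g_0)\right|\le \|w_n\|_{q}\,\|g_n-g_0\|_{q'},
\]
so it remains to prove the strong convergence $g_n\to g_0$ in $L^{q'}(\mathbb{R}^N)$. Here the compact support of $\psi$ is essential: since $A$ is bounded, on the bounded set $K$ the space $H^1_{A,V}$ coincides with $H^1$ with equivalent norm, so the Rellich--Kondrachov theorem gives $u_n\to u_0$ strongly in $L^s(K)$ for every $s\in[1,2^*)$ and, along a subsequence, $u_n\to u_0$ a.e.\ on $K$. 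By continuity of $\tilde f$ this yields $g_n\to g_0$ a.e., and the growth bound of Remark \ref{obs1} together with the strong $L^s(K)$ convergence upgrades a.e.\ convergence to strong convergence in $L^{q'}(K)$ (via the Vitali convergence theorem, the family $\{|g_n|^{q'}\}$ being equi-integrable on $K$). As $g_n-g_0$ is supported in $K$, this is the desired convergence in $L^{q'}(\mathbb{R}^N)$.

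I expect the main obstacle to be the strong convergence of the Nemytskii term $g_n\to g_0$ in $L^{q'}$: it is where the compactness of the local embedding and the precise range of $r$ in $(f2)$ must be used, and where one has to verify equi-integrability (or exhibit a suitable dominating function) carefully. By contrast, the convolution factor is controlled entirely by Lemma \ref{lemmaw} and the boundedness it provides, and the passage from the subsequence argument to the full sequence is routine.
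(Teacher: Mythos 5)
Your argument is correct, and it starts from exactly the decomposition the paper itself uses, namely
\[
D'(u_n)\cdot\psi-D'(u_0)\cdot\psi=\mathfrak{Re}\int_{\mathbb{R}^N}w_n\,(g_n-g_0)+\mathfrak{Re}\int_{\mathbb{R}^N}(w_n-w_0)\,g_0,
\qquad w_n=\frac{1}{|x|^\alpha}*F(|u_n|),\ \ g_n=\tilde{f}(|u_n|)u_n\overline{\psi},
\]
and it treats the second term the same way (the weak convergence of Lemma \ref{lemmaw} paired against the fixed function $g_0$, whose membership in $L^{2N/(2N-\alpha)}$ you, unlike the paper, actually verify). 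The genuine difference is in the first term. The paper disposes of it by invoking the weak convergence \eqref{tildef} of $\tilde{f}(|u_n|)u_n$, i.e.\ it pairs one weakly convergent sequence against the varying factor $w_n\overline{\psi}$, which Lemma \ref{lemmaw} only shows to be weakly convergent (hence bounded); as written this weak-against-weak pairing is not by itself a valid passage to the limit, and the strong convergence of one of the two factors is left implicit. You supply precisely that missing ingredient: Rellich--Kondrachov on the compact support $K$ of $\psi$ (legitimate, since $A$ is bounded and so $H^1_{A,V}$ is locally equivalent to $H^1$) gives $u_n\to u_0$ in $L^s(K)$ for $s<2^*$, and Vitali then upgrades this to $g_n\to g_0$ strongly in $L^{q'}(K)$ with $q'=2N/(2N-\alpha)$, after which H\"older against the bounded $\|w_n\|_{2N/\alpha}$ kills the term; your exponent checks ($q'<2$ and $(r-1)q'<2^*$, the latter being where the upper bound on $r$ in $(f2)$ enters) are the right ones. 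Your subsequence principle also repairs a second implicit step: the paper's proof uses the a.e.\ convergence $u_n\to u_0$ although the statement of the corollary assumes only weak convergence. In short, the paper's proof is shorter but heuristic at the crucial point, while yours is self-contained and rigorous at the modest cost of invoking local compactness.
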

\begin{proof}
	It follows from the growth condition on $f$ that $\tilde{f}(|u_n|)$ is bounded in $L^p(\mathbb{R}^N)$. Since $u_n(x)\to u_0(x)$ a.e. in $\mathbb{R}^N$ and $\tilde{f}$ is continuous, by applying Lemma \ref{lK} we conclude that
	\begin{align}\label{tildef}\tilde{f}(|u_n|)u_n\rightharpoonup \tilde{f}(|u_0|)u_0\quad\text{in }\ L^{q}(\mathbb{R}^N,\mathbb{C}).\end{align}
	Thus,
	\[\left|\int_{\mathbb{R}^N} \left[\frac{1}{|x|^\alpha} * F(|u_n|)\right]\tilde{f}(|u_n|) u_n\overline{\psi}-\int_{\mathbb{R}^N} \left[\frac{1}{|x|^\alpha} * F(|u_0|)\right] \tilde{f}(|u_0|)v\overline{\psi}\right|\hspace*{1.5cm}\]
	\begin{align*}
	&\leq \left|\int_{\mathbb{R}^N} \frac{1}{|x|^\alpha} * F(|u_n|)\left(\tilde{f}(|u_n|) u_n- \tilde{f}(|u_0|)u_0\right)\overline{\psi}\right|\\
	&\qquad+ \left|\int_{\mathbb{R}^N} \frac{1}{|x|^\alpha} *\left[F(|u_n|)-F(|u_0|)\right]\tilde{f}(|u_0|)u_0\overline{\psi}\right|.
	\end{align*}
	The claim follows from Lemma \ref{lemmaw} and \eqref{tildef}.
	$\hfill\Box$\end{proof}
\section{Ground state}\label{GS}

In order to consider the general case of the potential $V(y)$, we adapt a well-known result due to M. Struwe:

Let $(u_n)$ be the minimizing sequence given as consequence of Lemma \ref{gpm}, that is, $(u_n)\subset H^1_{A,V}(\mathbb{R}^{N},\mathbb{C})$ such that
\[J'_{A,V}(u_n)\to 0\qquad\textrm{and}\qquad J_{A,V}(u_n)\to c,\]
where
\[c=\inf_{u\in H^1_{A,V}(\mathbb{R}^{N},\mathbb{C})\setminus\{0\}}\max_{t\geq 0} J_{A,V}(tu).\]
We assume that $u_n\rightharpoonup u_0\in H^1_{A,V}(\mathbb{R}^{N},\mathbb{C})$. We define $u^1_n=u_n-u_0$ and consider the limit problem
\begin{equation}\label{Pinfty}
-(\nabla+iA_\infty)^2u+ V_\infty u=\bigg(\frac{1}{|x|^{\alpha}}*F(|u|)\bigg)\frac{f(|u|)}{|u|}u,
\end{equation}
where $A_\infty=\displaystyle\lim_{|x|\to \infty} A(x)$ and $V_\infty=\displaystyle\lim_{|x|\to \infty}V(y)$. The energy functional attached to this problem is, of course,
\[J_\infty(u)=\frac{1}{2}\|u\|^2_{A_\infty,V_\infty}-D(u). \]
\begin{lemma}[Splitting Lemma]\label{Struwe} Let $(u_n)\subset H^1_{A,V}(\mathbb{R}^{N},\mathbb{C})$ be such that
	\[J_{A,V}(u_n)\to c,\qquad J'_{A,V}(u_n)\to 0\]
	and $u_n\rightharpoonup u_0$ weakly on $H^1_{A,V}(\mathbb{R}^{N},\mathbb{C})$. Then $J'_{A,V}(u_0)=0$ and we have \emph{either}
	\begin{enumerate}
		\item [($i$)] $u_n\to u_0$ strongly on $H^1_{A,V}(\mathbb{R}^{N},\mathbb{C})$;
		\item [($ii$)] or there exist $k\in\mathbb{N}$, $(y^j_n)\in\mathbb{R}^N$ such that $|y^j_n|\to\infty$ for $j\in \{1,\ldots,k\}$ and nontrivial solutions $u^1,\ldots,u^k$ of problem \eqref{Pinfty} so that
		\[J_{A,V}(u_n)\to J_{A,V}(u_0)+\sum_{j=1}^k J_\infty(u_j)\]
		and
		\[\left\|u_n-u_0-\sum_{j=1}^ku^j(\cdot-y^j_n)\right\|\to 0.\]
	\end{enumerate}
\end{lemma}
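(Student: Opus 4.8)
The plan is to reproduce Struwe's iterative profile-decomposition argument, adapted to the nonlocal and magnetic features of \eqref{Ptilde}. By Lemma \ref{bounded} the sequence is bounded, so after passing to a subsequence we may assume $u_n\rightharpoonup u_0$ and $u_n(x)\to u_0(x)$ a.e. First I would check that $u_0$ is a critical point: for every $\psi\in C^\infty_c(\mathbb{R}^N,\mathbb{C})$ the quadratic part $\langle u_n,\psi\rangle_{A,V}\to\langle u_0,\psi\rangle_{A,V}$ by weak convergence, while Corollary \ref{lemmaconvderiv} gives $D'(u_n)\cdot\psi\to D'(u_0)\cdot\psi$. Since $J'_{A,V}(u_n)\to0$, we obtain $J'_{A,V}(u_0)\cdot\psi=0$, and density of $C^\infty_c(\mathbb{R}^N,\mathbb{C})$ yields $J'_{A,V}(u_0)=0$.

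Next I set $u^1_n=u_n-u_0$, so that $u^1_n\rightharpoonup0$, and the heart of the argument is the asymptotic decoupling of energy and derivative. For the quadratic part, weak convergence gives $\|u_n\|^2_{A,V}=\|u_0\|^2_{A,V}+\|u^1_n\|^2_{A,V}+o(1)$; since the mass of $u^1_n$ escapes to infinity, where $A(x)\to A_\infty$ and $V(x)\to V_\infty$, a cut-off argument using ($V2$) and the convergence of $A$ converts the variable coefficients into constants, giving $\|u^1_n\|^2_{A,V}=\|u^1_n\|^2_{A_\infty,V_\infty}+o(1)$. For the nonlocal term I would establish a Brezis--Lieb type identity $D(u_n)=D(u_0)+D(u^1_n)+o(1)$ and the companion splitting for the derivative, $J'_\infty(u^1_n)\cdot\psi=J'_{A,V}(u_n)\cdot\psi-J'_{A,V}(u_0)\cdot\psi+o(1)$, relying on the weak convergence of the convolution in Lemma \ref{lemmaw} together with the Hardy--Littlewood--Sobolev estimate. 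Combining these yields $J_{A,V}(u_n)=J_{A,V}(u_0)+J_\infty(u^1_n)+o(1)$ and shows that $(u^1_n)$ is a Palais--Smale sequence for $J_\infty$ converging weakly to $0$.

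I would then invoke the vanishing/non-vanishing dichotomy. If $\|u^1_n\|_{A,V}\to0$ we are in case ($i$). Otherwise a non-vanishing argument (Lions' lemma combined with the lower bound of Lemma \ref{lN} transported to $J_\infty$) produces a sequence $(y^1_n)$ such that $u^1_n(\cdot+y^1_n)\rightharpoonup u^1\neq0$; the translates must escape, i.e. $|y^1_n|\to\infty$, for otherwise the nonzero weak limit would contradict $u^1_n\rightharpoonup0$. Because the translated operator $-(\nabla+iA(\cdot+y^1_n))^2$ converges to $-(\nabla+iA_\infty)^2$, the profile $u^1$ is a nontrivial solution of \eqref{Pinfty}. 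Setting $u^2_n=u^1_n-u^1(\cdot-y^1_n)$, the same splitting shows that $(u^2_n)$ is a Palais--Smale sequence for $J_\infty$ at the reduced level $c-J_{A,V}(u_0)-J_\infty(u^1)$, and the construction is iterated. The process stops after finitely many steps: on the Nehari manifold the Ambrosetti--Rabinowitz inequality \eqref{AR} forces $J_\infty(u)\geq\tfrac14\|u\|^2_{A_\infty,V_\infty}\geq\tfrac14\beta^2>0$ for every nontrivial solution, so each extracted bubble removes at least a fixed amount of energy from a finite total, bounding $k$.

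The main obstacle is the asymptotic splitting of the nonlocal Choquard term, that is, the Brezis--Lieb identity $D(u_n)=D(u_0)+D(u^1_n)+o(1)$ and its derivative analogue, because the double integral couples the fixed part $u_0$ with the escaping part $u^1_n$ in a way that is not manifestly additive; here Lemma \ref{lemmaw} and the Hardy--Littlewood--Sobolev estimate in \eqref{estconv} do the essential work, and the cross terms must be shown to vanish using the weak convergence of $\tfrac{1}{|x|^\alpha}*F(|u_n|)$. A secondary technical point, specific to the magnetic setting, is controlling the translation by $y^1_n\to\infty$ so that the variable coefficients $A,V$ genuinely pass to the constants $A_\infty,V_\infty$ and the extracted profiles solve the limit equation \eqref{Pinfty}.
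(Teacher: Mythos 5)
Your proposal is correct and follows essentially the same route as the paper's own (sketched) proof: the Struwe iteration with $u^1_n=u_n-u_0$, identification of $J'_{A,V}(u_0)=0$ via the convergence of $D'$, the Lions vanishing/non-vanishing dichotomy, extraction of translations with $|y^1_n|\to\infty$ whose profiles solve the limit problem \eqref{Pinfty}, and termination of the iteration because every nontrivial critical point of $J_\infty$ carries a fixed positive energy by the Ambrosetti-Rabinowitz inequality \eqref{AR}. The only cosmetic differences are that you derive the energy quantum from the Nehari lower bound (giving $J_\infty(u)\geq\frac{1}{4}\beta^2>0$) where the paper compares with the ground-state level of \eqref{Pinfty}, and you phrase the splitting of the nonlocal term as a Brezis-Lieb identity where the paper appeals to its Corollary \ref{corD}.
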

\begin{proof}(Sketch) We simply adapt the arguments presented in in \cite[Lemma 2.3]{FMM} and \cite[Theorem 8.4]{Willem}. Since $D'(u_n)\cdot\phi\to D'(u_0)\cdot\phi$, it follows that $J'_{A,V}(u_0)\cdot \phi=0$.
	
	By setting $u^1_n=u_n-u_0$, we have
	\begin{enumerate}
		\item [($a_1$)] $\|u^1_n\|^2_{A,V}=\|u_n\|^2_{A,V}-\|u_0\|^2_{A,V}+o_n(1)$;
		\item [($b_1$)] $J_\infty(u^1_n)\to c-J_{A,V}(u_0)$;
		\item [($c_1$)] $J'_\infty(u^1_n)\to 0$.
	\end{enumerate}
	
	Let us define
	\[\delta:=\limsup_{n\to\infty}\sup_{y\in\mathbb{R}^N}\int_{B_1(y)}|u^1_n|^2\dd x.\]
	If $\delta=0$, it follows that $u^1_n\to 0$ in $L^t(\mathbb{R}^N)$ for all $t\in (2,2^*)$. It follows that $u^1_n\to 0$ in $H^1_{A,V}(\mathbb{R}^N,\mathbb{C})$, since $J'_\infty(v^1_n)\to 0$. In this case, the proof of Lemma \ref{Struwe} is complete.
	
	So, let us suppose that $\delta>0$. Then, we obtain a sequence $(y^1_n)\subset\mathbb{R}^N$ such that
	\[\int_{B_1(y_n)}|u^1_n|^2\dd x\geq\frac{\delta}{2}.\]
	
	By setting $v^1_n=u^1_n(\cdot+y^1_n)$, we obtain a new bounded sequence $(v^1_n)$. Therefore, we assume that $v^1_n\rightharpoonup v_1$ in $H^1_{A,V}(\mathbb{R}^N,\mathbb{C})$ and $v^1_n\to v$ a.e. in $\mathbb{R}^N$. Since
	\[\int_{B_1(0)}|v^1_n|^2\dd x>\frac{\delta}{2},\]
	we conclude that $u^1\neq 0$ as consequence of Sobolev's immersion. We also conclude that $(y_n)$ is unbounded, since $u^1_n\rightharpoonup 0$ in $H^1_{A,V}(\mathbb{R}^{N},\mathbb{C})$. Therefore, we may assume that $|y^1_n|\to \infty$. Then, it is easy to see that $J'_\infty(u^1)=0$.
	
	We now define $u^2_n=u^1_n-u^1(\cdot-y_n)$. We then have
	\begin{enumerate}
		\item [($a_2$)] $\|u^2_n\|^2_{A,V}=\|u_n\|^2_{A,V}-\|u_0\|^2_{A,V}-\|u^1\|^2_{A,V}+o_n(1)$;
		\item [($b_2$)] $J_\infty(u^2_n)\to c-J_{A,V}(u_0)-J_\infty(u_1)$;
		\item [($c_2$)] $J'_\infty(u^2_n)\to 0$.
	\end{enumerate}
	
	Proceeding by iteration, we observe that, if $u$ is a nontrivial critical point of $J_\infty$ and $\bar u$ a ground state of problem \eqref{Pinfty}, then the Ambrosetti-Rabinowitz condition implies that 
	\[J_\infty(u)\geq J_\infty(\bar u)=\int_{\mathbb{R}^{N}}\left(\frac{1}{2}f(|\bar u|)|\bar u|-F(|\bar u|)\right)=:\beta>0.\]
	
	Therefore, it follows from ($b_2$) that the iteration process must end at some index $k\in\mathbb{N}$. 
	$\hfill\Box$\end{proof}\vspace*{.1cm}

\begin{remark}\label{remasymp}	Observe that, in particular, the proof shows that the sequence $u^k_n$ converges to $\bar u$ and we have a solution of problem \eqref{Pinfty}.
\end{remark}
The next result also follows \cite[Corollary 2.3]{FMM}, see also \cite{BMP}. We present the proof for the convenience of the reader.
\begin{lemma}\label{PS}
	The functional $J_{A,V}$ satisfies $(PS)_c$ for any $0\leq c<c_\infty$.
\end{lemma}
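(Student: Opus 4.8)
The plan is to play the Splitting Lemma (Lemma \ref{Struwe}) against an energy threshold argument: below the level $c_\infty$ there is not enough energy to form even a single nontrivial bubble escaping to infinity, so the only possibility allowed by the dichotomy is strong convergence. Here $c_\infty$ denotes the ground state level of the limit problem \eqref{Pinfty}, that is, the number $\beta$ produced at the end of the proof of Lemma \ref{Struwe}; applying the mountain pass geometry of Lemma \ref{gpm} to the autonomous functional $J_\infty$ shows that $c_\infty>0$.

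First I would take a sequence $(u_n)$ with $J_{A,V}(u_n)\to c$ and $J'_{A,V}(u_n)\to 0$, where $0\le c<c_\infty$. By Lemma \ref{bounded} it is bounded, so along a subsequence $u_n\rightharpoonup u_0$ with $u_n(x)\to u_0(x)$ a.e.; the Splitting Lemma then gives $J'_{A,V}(u_0)=0$ together with the alternative that either (i) $u_n\to u_0$ strongly, or (ii) there exist $k\ge 1$ nontrivial solutions $u^1,\dots,u^k$ of \eqref{Pinfty} with
\[ c=J_{A,V}(u_0)+\sum_{j=1}^{k}J_\infty(u^j). \]
Establishing $(PS)_c$ thus amounts to excluding alternative (ii).

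The key step is an energy count. Since $J'_{A,V}(u_0)=0$, the Nehari identity $\|u_0\|^2_{A,V}=\int_{\mathbb{R}^N}(\frac{1}{|x|^\alpha}*F(|u_0|))f(|u_0|)|u_0|$ gives
\[ J_{A,V}(u_0)=\frac{1}{2}\int_{\mathbb{R}^N}\left(\frac{1}{|x|^\alpha}*F(|u_0|)\right)\big(f(|u_0|)|u_0|-F(|u_0|)\big). \]
Because $F\ge 0$ the convolution kernel is non-negative, and the Ambrosetti-Rabinowitz inequality \eqref{AR} yields $f(t)t-F(t)>F(t)\ge 0$; hence $J_{A,V}(u_0)\ge 0$. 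On the other hand, each $u^j$ is a nontrivial critical point of $J_\infty$, so $J_\infty(u^j)\ge c_\infty$. Substituting into the identity above gives $c\ge 0+k\,c_\infty\ge c_\infty$, contradicting $c<c_\infty$. Therefore (ii) cannot occur, alternative (i) holds, and $u_n\to u_0$ strongly, which is precisely $(PS)_c$.

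The main obstacle is the inequality $J_\infty(u^j)\ge c_\infty$, i.e. the fact that $c_\infty$ is the least energy among \emph{all} nontrivial critical points of the limit functional, and not merely its mountain pass value. This is exactly the monotonicity recorded at the close of the proof of Lemma \ref{Struwe}, and it rests on the Nehari characterization $c_\infty=\inf_{u\ne 0}\max_{t\ge 0}J_\infty(tu)$ for the autonomous problem, which holds by the same reasoning that established the analogous statement for $J_{A,V}$: the map $t\mapsto J_\infty(tu)$ has a unique positive maximizer lying on the Nehari manifold of $J_\infty$, so any nontrivial critical point realizes the maximum along its ray and hence has energy at least $c_\infty$. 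Once this is in place, the remaining steps are routine.
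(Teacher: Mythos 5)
Your proposal is correct and follows essentially the same route as the paper: boundedness via Lemma \ref{bounded}, the Splitting Lemma dichotomy, non-negativity of $J_{A,V}(u_0)$ from the Nehari identity plus the Ambrosetti--Rabinowitz inequality \eqref{AR}, and the energy count $c = J_{A,V}(u_0)+\sum_{j}J_\infty(u^j)\geq kc_\infty\geq c_\infty$ to rule out bubbling below $c_\infty$. Your extra paragraph justifying $J_\infty(u^j)\geq c_\infty$ via the Nehari characterization of the ground state level of the limit functional is a point the paper asserts without comment, so it is a welcome (and consistent) addition rather than a deviation.
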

\begin{proof}Let us suppose that $(u_n)$ satisfies
	\[J_{A,V}(u_n)\to c<c_\infty\qquad\text{and}\qquad J'_{A,V}(u_n)\to 0.\]
	
	According to Lemma \ref{bounded}, we can suppose that the sequence $(u_n)$ is bounded. Therefore, for a subsequence, we have $u_n\rightharpoonup u_0$ in $H^1_{A,V}(\mathbb{R}^{N},\mathbb{C})$. It follows from the Splitting Lemma \ref{Struwe} that $J'_{A,V}(u_0)=0$. Since
	\begin{align*}
	J'_{A,V}(u_0)\cdot u_0&=\Vert u_0\Vert_{A,V}^{2}-\int_{\mathbb{R}^N}\bigg(\frac{1}{|x|^{\alpha}}*F(|u_0|)\bigg)f(|u_0|)|u_0|
	\end{align*}
	we conclude that
	\begin{align}\label{Iu0}
	J_{A,V}(u_0)&=\frac{1}{2}\int_{\mathbb{R}^{N}}\left[\frac{1}{|x|^\alpha}*F(|u_0|)\right]\left(f(|u_0|)|u_0|-2F(|u_0|)\right)\nonumber\\
	&\quad+\frac{1}{2}\int_{\mathbb{R}^{N}}\left[\frac{1}{|x|^\alpha}*F(|u_0|)\right]F(|u_0|)\nonumber\\
	&>\frac{1}{2}\int_{\mathbb{R}^{N}}\left[\frac{1}{|x|^\alpha}*F(|u_0|)\right]\left(f(|u_0|)|u_0|-2F(|u_0|)\right)>0
	\end{align}
	as a consequence of the Ambrosetti-Rabinowitz condition.
	
	If $u_n\not\to u_0$ in $H^1_{A,V}(\mathbb{R}^{N},\mathbb{C})$, by applying again the Splitting Lemma we guarantee the existence of $k\in\mathbb{N}$ and nontrivial solutions $u^1,\ldots,u^k$ of problem \eqref{Pinfty} satisfying
	\[\lim_{n\to\infty}J_{A,V}(u_n)=c=J_{A,V}(u_0)+\sum_{j=1}^kJ_\infty(u^j)\geq kc_\infty\geq c_\infty\]
	contradicting our hypothesis. We are done.
	$\hfill\Box$\end{proof}\vspace*{.2cm}

We prove the next result by adapting the proof given in Furtado, Maia e Medeiros \cite[Proposition 3.1]{FMM}, see also \cite{BMP}:
\begin{lemma}\label{ccinfty}Suppose that $V(y)$ satisfies $(V_3)$. Then
	\[0<c<c_\infty,\]
	where $c$ is characterized in Lemma \ref{bounded}.
\end{lemma}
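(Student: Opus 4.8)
The plan is to prove the two inequalities separately. For the lower bound $c>0$ I would use the characterization $c=\inf_{u\neq0}\max_{t\ge0}J_{A,V}(tu)$ from Lemma \ref{bounded} together with part $(i)$ of Lemma \ref{gpm}: for any $u\neq0$ the ray $t\mapsto tu$ meets the sphere $S$ at $t=\rho/\|u\|_{A,V}$, where $J_{A,V}\ge\delta$, so $\max_{t\ge0}J_{A,V}(tu)\ge\delta$ and hence $c\ge\delta>0$. For the strict upper bound, the idea is to test the mountain-pass level of the original problem against a ground state $\bar u$ of the limit problem \eqref{Pinfty}; its existence is the content of Remark \ref{remasymp}, and $c_\infty$ is the mountain-pass (Nehari) level of the autonomous functional $J_\infty$, so that $J_\infty(\bar u)=c_\infty=\max_{t\ge0}J_\infty(t\bar u)$. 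The crucial observation is that the nonlocal term $D$ depends on $u$ only through $|u|$, so $D(t\bar u)$ is common to $J_{A,V}$ and $J_\infty$; consequently the comparison of the two functionals along the ray $t\bar u$ reduces entirely to comparing the quadratic forms, and it suffices to prove
\[\|\bar u\|^2_{A,V}<\|\bar u\|^2_{A_\infty,V_\infty}.\]

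To evaluate this difference I would exploit the gauge structure of the limit ground state. Since \eqref{Pinfty} is autonomous with constant vector potential $A_\infty$, the substitution $\bar u=e^{-iA_\infty\cdot x}w$ transforms it into the corresponding problem with $A_\infty$ replaced by $0$, so that $w$ may be taken real and positive. A direct computation then gives $\nabla_{A_\infty}\bar u=e^{-iA_\infty\cdot x}\nabla w$ and $\nabla_A\bar u=e^{-iA_\infty\cdot x}\big(\nabla w+i(A-A_\infty)w\big)$, so the magnetic cross terms disappear and
\[\|\bar u\|^2_{A,V}-\|\bar u\|^2_{A_\infty,V_\infty}=\int_{\mathbb{R}^N}\left(|A-A_\infty|^2+V-V_\infty\right)w^2\,\dd x.\]
It is exactly here that the hypotheses enter: $(V3)$ gives $V-V_\infty\le0$, and $(AV)$ is precisely what is needed to dominate the magnetic fluctuation $|A-A_\infty|^2$ by the potential gain $V_\infty-V$, forcing the integrand to be nonpositive and negative on a set of positive measure. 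Hence the difference is strictly negative.

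From $\|\bar u\|^2_{A,V}<\|\bar u\|^2_{A_\infty,V_\infty}$ and the shared term $D(t\bar u)$ I then obtain $J_{A,V}(t\bar u)<J_\infty(t\bar u)$ for every $t>0$; evaluating at the point $t^\ast>0$ where $t\mapsto J_{A,V}(t\bar u)$ attains its maximum yields
\[c\le\max_{t\ge0}J_{A,V}(t\bar u)=J_{A,V}(t^\ast\bar u)<J_\infty(t^\ast\bar u)\le\max_{t\ge0}J_\infty(t\bar u)=c_\infty,\]
which is the assertion. I expect the main obstacle to be the magnetic cross term $2\int(A-A_\infty)\cdot\mathfrak{Im}(\overline{\bar u}\,\nabla\bar u)$ hidden in $\|\bar u\|^2_{A,V}$: a priori it has no definite sign, and even after the gauge reduction removes it one must still guarantee that $|A-A_\infty|^2$ does not overwhelm $V_\infty-V$. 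The delicate point of the proof is therefore the sign of the quadratic-form difference, and it is the combination of the gauge-canonical form of $\bar u$ with $(AV)$ and $(V3)$ that makes it work.
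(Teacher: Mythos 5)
Your lower bound $c>0$ is correct, and your overall strategy for the upper bound is the same as the paper's: test the level $c$ against a ground state $\bar u$ of the limit problem \eqref{Pinfty}, note that $D(t\bar u)$ is common to $J_{A,V}$ and $J_\infty$ because $D$ sees only $|u|$, and reduce everything to the strict comparison $\|\bar u\|^2_{A,V}<\|\bar u\|^2_{A_\infty,V_\infty}$. Where the paper then passes through the Nehari projection ($t_{\bar u}\bar u\in\mathcal{N}_{A,V}$, $t_{\bar u}<1$, monotonicity of $g$), your direct comparison $J_{A,V}(t\bar u)<J_\infty(t\bar u)$ for every $t>0$, followed by evaluation at the maximizer, is a cleaner route; that reduction is fine, as is your gauge computation $\|\bar u\|^2_{A,V}-\|\bar u\|^2_{A_\infty,V_\infty}=\int(|A-A_\infty|^2+V-V_\infty)w^2$ (modulo the secondary point that the solution furnished by Remark \ref{remasymp} must first be replaced by a gauge-canonical ground state $e^{-iA_\infty\cdot x}w$ with $w$ real and positive, which requires a short diamagnetic argument you only gesture at).

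The genuine gap is the sign claim: \textup{($AV$)} and \textup{($V3$)} do \emph{not} force $|A-A_\infty|^2+V-V_\infty\le 0$. Hypothesis \textup{($AV$)} reads $|A|^2+V<|A_\infty|^2+V_\infty$, i.e.\ it controls $|A|^2-|A_\infty|^2$, not $|A-A_\infty|^2$; the two differ by the cross term $2A_\infty\cdot(A_\infty-A)$, which has no sign. Concretely, at any point where $A=0$ and $V=V_\infty$, \textup{($AV$)} holds trivially while your integrand equals $|A_\infty|^2>0$; worse, taking $A\equiv-A_\infty$ and $V\equiv V_\infty-\epsilon$ on a large ball $B_R$ (smoothly matched to $A_\infty$, $V_\infty$ at infinity) is compatible with \textup{($V1$)--($V3$)} and \textup{($AV$)}, yet the integrand there equals $4|A_\infty|^2-\epsilon>0$; since $w$ is determined by $V_\infty$ and the nonlinearity alone, choosing $R$ large and $\epsilon$ small makes $\int(|A-A_\infty|^2+V-V_\infty)w^2>0$. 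So the step "the integrand is nonpositive" fails, and with it your whole chain. What your (correct) computation actually shows is that the quadratic-form comparison requires the pointwise condition $|A-A_\infty|^2+V\le V_\infty$, which neither follows from nor implies \textup{($AV$)}. For what it is worth, this is exactly the point the paper itself glosses over: it asserts $\|\bar u\|^2_{A,V}<\|\bar u\|^2_{A_\infty,V_\infty}$ "from \textup{($AV$)}" with no computation; your attempt makes the difficulty explicit, but the justification you give for it is not valid as stated.
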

\begin{proof}Let $\bar u$ be the weak solution of \eqref{Pinfty} obtained in the proof of the Splitting Lemma (see Remark \ref{remasymp}) and $t_{\bar u}>0$ the unique number such that $t_{\bar u}\bar u\in \mathcal{N}_{A,V}$. We claim that $t_{\bar u}<1$. Indeed, it follows from the condition ($AV$) that
	\[\int_{\mathbb{R}^{N}}\left[\frac{1}{|x|^\alpha}*F(|t_{\bar u}\bar u|)\right]f(|t_{\bar u}\bar u|)|t_{\bar u}\bar u|=t^2_{\bar u}\|\bar u\|_{A,V}\hspace*{4.8cm}\]
	\begin{align*}
	&<t^2_{\bar u}\|\bar u\|_{A_\infty,V_\infty}\\
	&=t^2_{\bar u}\int_{\mathbb{R}^{N}}\left[\frac{1}{|x|^\alpha}*F(|\bar u|)\right]f(|\bar u|)|\bar u|\\
	&=t^2_{\bar u}\left(\int_{\mathbb{R}^{N}}\left[\frac{1}{|x|^\alpha}*F(|\bar u|)\right]f(|\bar u|)|\bar u|+\int_{\mathbb{R}^{N}}\left[\frac{1}{|x|^\alpha}*F(|t_{\bar u}\bar u|)\right]f(|\bar u|)|\bar u|\right.\\
	&\qquad\quad\left.-\int_{\mathbb{R}^{N}}\left[\frac{1}{|x|^\alpha}*F(|t_{\bar u}\bar u|)\right]f(|\bar u|)|\bar u|\right)
	\end{align*}
	thus yielding
	\begin{align*}
	0&>\int_{\mathbb{R}^{N}}\left[\frac{1}{|x|^\alpha}*F(|t_{\bar u}\bar u)|\right]\left(\tilde{f}(|t_{\bar u}\bar u|)-\tilde{f}(|\bar u|)\right)\\
	&\qquad +t^2_{\bar u}\int_{\mathbb{R}^{N}}\left[\frac{1}{|x|^\alpha}*\left(F(|t_{\bar u}\bar u|)-F(|\bar u|)\right)\right]f(|\bar u|)|\bar u|.
	\end{align*}	
	
	If $t_{\bar u}\geq 1$, since $\tilde{f}$ is increasing, the first integral is non-negative and the second as well, since $F$ is also increasing. We conclude that $t_{\bar u}<1$.
	
	Lemma \ref{bounded} and its previous comments show that
	\begin{align*}c\leq \max_{t\geq 0}J_{A,V}(t\bar u)&=J_{A,V}(t_{\bar u}\bar u)\\
	&=\int_{\mathbb{R}^{N}}\left[\frac{1}{|x|^\alpha}*F(|t_{\bar u}\bar u|)\right]\left(\frac{1}{2}f(|t_{\bar u}\bar u|)|t_{\bar u}\bar u|-F(|t_{\bar u}\bar u|)\right).\end{align*}
	Since
	\[g(t)=\int_{\mathbb{R}^{N}}\left[\frac{1}{|x|^\alpha}*F(|t\bar u|)\right]\left(\frac{1}{2}f(|t\bar u|)|t\bar u|-F(|t\bar u|)\right) \]
	is a strictly increasing function, we conclude that
	\[c=g(t_{\bar u})<g(1)=\int_{\mathbb{R}^{N}}\left[\frac{1}{|x|^\alpha}*F(|\bar u|)\right]\left(\frac{1}{2}f(|\bar u|)|\bar u|-F(|\bar u|)\right)=c_\infty,\]
	proving our result. $\hfill\Box$\end{proof}\vspace*{.4cm}

\noindent\textit{Proof of Theorem \ref{t1}}. Let $(u_n)$ be the minimizing sequence given by Lemma \ref{gpm}. It follows from Lemmas \ref{PS} and \ref{ccinfty} that $u_n$ converges to $u\in H^1_{A,V}(\mathbb{R}^N,\mathbb{C})$ satisfying  $J_{A,V}(u)=c$ and $J'_{A,V}(u)=0$.
$\hfill\Box$

\section{On the multiplicity of solutions}\label{multiplicity}
In order to obtain multiplicity of solutions, we consider in this section a particular case of that considered by Cingolani, Clapp and Secchi in \cite{Cingolani}. We think that the direct proof we present is interesting.

So, let $G$ be a closed subgroup of $O(n)$, the group of orthogonal transformations in $\mathbb{R}^N$. As in \cite{Cingolani}, we suppose that $A(gx)=gA(x)$ and $V(gx)=V(x)$ for all $g\in G$ and $x\in\mathbb{R}^N$ and take a continuous group homomorphism $\tau\colon G\to S^1$ into the unit complex numbers $S^1$. 

We consider the space
\[H^1_A(\mathbb{R}^N,\mathbb{C})^\tau=\left\{u\in H^1_A(\mathbb{R}^N,\mathbb{C})\,:\, u(gx)=\tau(g)u(x),\ \forall\ g\in G,\ \forall\ x\in \mathbb{R}^N\right\}.\]
We apply the following compactness result due to P.L. Lions:
\begin{lemma}[Lions]\label{lLions}Let $G$ be a closed subgroup of $O(N)$ and denote 
	\[H^1_G=\left\{u\in H^1(\mathbb{R}^N)\,:\, gu=u,\ \forall\ g\in G\right\}.\]
	
	Suppose that $\sum_{j=1}^k N_j=N$, $N_j\geq 2$ for all $j\in\{1,\ldots,k\}$, and
	\[G=O(N_1)\times O(N_2)\times\cdots\times O(N_k).\]
	
	Then, the immersion $H^1_G(\mathbb{R}^N)\subset L^p(\mathbb{R}^N)$ is compact for $2<p<2^*$.
\end{lemma}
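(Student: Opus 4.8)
The plan is to reduce the compactness assertion to a vanishing argument and then use the symmetry to rule out loss of mass at infinity. Since $H^1_G$ is a closed subspace of $H^1(\mathbb{R}^N)$, every bounded sequence $(u_n)\subset H^1_G$ has a subsequence with $u_n\rightharpoonup u$ weakly in $H^1(\mathbb{R}^N)$, and the weak limit again belongs to $H^1_G$. Setting $v_n=u_n-u$, it suffices to prove that $v_n\rightharpoonup 0$ in $H^1_G$ forces $v_n\to 0$ strongly in $L^p(\mathbb{R}^N)$ for every $p\in(2,2^*)$. For this I would invoke Lions' vanishing lemma (cf. \cite[Lemma 1.21]{Willem}): if
\[\lim_{n\to\infty}\ \sup_{y\in\mathbb{R}^N}\int_{B_1(y)}|v_n|^2\,\dd x = 0,\]
then $v_n\to 0$ in $L^p(\mathbb{R}^N)$ for all $p\in(2,2^*)$. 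Thus the entire proof reduces to excluding the non-vanishing alternative.

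Arguing by contradiction, suppose the supremum above does not tend to $0$. Passing to a subsequence, there are $\delta>0$ and points $y_n\in\mathbb{R}^N$ with $\int_{B_1(y_n)}|v_n|^2\,\dd x\geq\delta$. First I would dispose of the case in which $(y_n)$ stays bounded: then $B_1(y_n)\subset B_R(0)$ for a fixed $R$, and the Rellich--Kondrachov theorem yields $v_n\to 0$ strongly in $L^2(B_R(0))$ (because $v_n\rightharpoonup 0$ in $H^1$), contradicting $\int_{B_1(y_n)}|v_n|^2\geq\delta$. Hence, after a further subsequence, we may assume $|y_n|\to\infty$.

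The heart of the argument is to convert the symmetry into a contradiction with the $L^2$-bound. Write $\mathbb{R}^N=\mathbb{R}^{N_1}\times\cdots\times\mathbb{R}^{N_k}$ and $y_n=(y_n^1,\ldots,y_n^k)$ accordingly. Since $|y_n|\to\infty$, some block satisfies $|y_n^{j_0}|\to\infty$ along a subsequence. Because $N_{j_0}\geq 2$ and $v_n$ is invariant under the factor $O(N_{j_0})\subset G$, the function $v_n$ is constant on the $O(N_{j_0})$-orbits, so $\int_{B_1(gy_n)}|v_n|^2\,\dd x=\int_{B_1(y_n)}|v_n|^2\,\dd x\geq\delta$ for every $g\in O(N_{j_0})$. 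On the sphere of radius $|y_n^{j_0}|$ in $\mathbb{R}^{N_{j_0}}$ I would choose rotations $g_1,\ldots,g_{m_n}\in O(N_{j_0})$ whose images $g_iy_n$ are pairwise at distance at least $2$, so that the balls $B_1(g_iy_n)$ are mutually disjoint. Summing the equal contributions over these disjoint balls gives
\[\|v_n\|_{L^2(\mathbb{R}^N)}^2 \ \geq\ \sum_{i=1}^{m_n}\int_{B_1(g_iy_n)}|v_n|^2\,\dd x \ \geq\ m_n\,\delta,\]
so the boundedness of $(v_n)$ in $L^2$ forces $m_n$ to remain bounded.

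The step I expect to require the most care is the geometric packing estimate that makes $m_n\to\infty$, which is precisely where the hypothesis $N_j\geq 2$ is used (for $N_j=1$ the orbit sphere is two points and no growth occurs, which explains the exclusion of one-dimensional factors). Concretely, one must show that on a sphere of radius $R$ in $\mathbb{R}^{m}$ with $m\geq 2$ one can place at least $c\,R^{m-1}$ points that are pairwise $2$-separated. I would obtain this by a covering argument: a maximal $2$-separated subset of the sphere has the property that the radius-$2$ balls about its points cover the sphere, whose $(m-1)$-dimensional measure is of order $R^{m-1}$, while each such ball meets the sphere in a set of bounded measure; hence the cardinality $m_n$ is at least $c\,R^{m-1}\to\infty$ with $R=|y_n^{j_0}|$. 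This contradicts the boundedness of $m_n$ deduced above, closes the non-vanishing case, and establishes the compact embedding $H^1_G(\mathbb{R}^N)\hookrightarrow L^p(\mathbb{R}^N)$ for $2<p<2^*$.
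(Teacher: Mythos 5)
Your proof is correct, but note that the paper itself offers no proof of this lemma at all: it is quoted as a known compactness result of P.~L.~Lions (it is Corollary 1.25 in Willem's book, cited as \cite{Willem} in the paper, and goes back to Lions' concentration--compactness and symmetry papers), and the paper immediately applies it. What you have written is essentially a complete reconstruction of that classical proof: reduction via the vanishing lemma \cite[Lemma 1.21]{Willem}, elimination of bounded non-vanishing centers by Rellich--Kondrachov, and then the orbit-counting step, in which the $O(N_{j_0})$-invariance of $v_n$ spreads the mass $\delta$ over $m_n$ disjoint unit balls centered on the orbit sphere of radius $|y_n^{j_0}|\to\infty$, while the packing bound $m_n\geq c\,|y_n^{j_0}|^{N_{j_0}-1}\to\infty$ (valid precisely because $N_{j_0}\geq 2$) contradicts $\|v_n\|_{L^2}^2\geq m_n\delta$ with $(v_n)$ bounded in $L^2$. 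All the individual steps check out: the weak limit stays in $H^1_G$ because that subspace is closed and convex, hence weakly closed; the change of variables giving $\int_{B_1(gy_n)}|v_n|^2=\int_{B_1(y_n)}|v_n|^2$ is legitimate since $g$ is an isometry fixing $v_n$; and the covering argument for the maximal $2$-separated set is sound, since a chordal-radius-$2$ cap on a sphere of radius $R$ has surface measure bounded uniformly in $R$, while the whole sphere has measure of order $R^{N_{j_0}-1}$. Your remark that the argument visibly fails for $N_j=1$ (a two-point orbit sphere) is exactly the right explanation for the hypothesis $N_j\geq 2$. In short: the paper buys this lemma by citation; you have supplied the standard proof behind the citation, and it is correct.
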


Observe that, if $u\in  H^1_A(\mathbb{R}^N,\mathbb{C})^\tau$, then $|u|\in H^1_G(\mathbb{R}^N)$.\vspace*{.4cm}

\noindent\textit{Proof of Theorem \ref{t2}}.
It follows from applying Theorem 10.10 from Ambrosetti e Malchiodi \cite{AM} to the Nehari manifold $M=\mathcal{N}_{A,V}$. 
$\hfill\Box$

\end{document}